\documentclass[11pt]{amsart}


\usepackage{eucal,amsrefs,graphicx,amssymb,mathrsfs}
\usepackage[all, knot]{xy}


\theoremstyle{definition}
\newtheorem{ex}{Example}[section]
\newtheorem*{ex*}{Example}

\newtheorem*{acknowledgement}{Acknowledgements}

\theoremstyle{plain}
\newtheorem{thm}{Theorem}[section]
\newtheorem{prop}[thm]{Proposition}
\newtheorem*{cor*}{Corollary}
\newtheorem{cor}[thm]{Corollary}
\newtheorem{lem}[thm]{Lemma}

\newtheorem*{thm*}{Theorem}

\theoremstyle{remark}
\newtheorem*{rem}{Remark}

\numberwithin{equation}{section}


\DeclareMathOperator{\Hom}{Hom}
\DeclareMathOperator{\Pic}{Pic}

\DeclareMathOperator{\CDiv}{CDiv}

\DeclareMathOperator{\Span}{span}
\DeclareMathOperator{\sgn}{sgn}
\DeclareMathOperator{\ad}{ad}
\DeclareMathOperator{\Bir}{Bir}
\DeclareMathOperator{\Vol}{Vol}


\def\A{\mathcal{A}}
\def\B{\mathcal{B}}
\def\C{\mathbb{C}}
\def\Q{\mathbb{Q}}
\def\P{\mathbb{P}}
\def\R{\mathbb{R}}
\def\Z{\mathbb{Z}}
\def\P{\mathbb{P}}

\def\N{\mathbb{N}}

\def\F{{\mathfrak{F}}}

\def\M{{\bf{M}}}

\def\SS{\mathcal{S}}

\def\i{{\texttt{\textit i}}}

\def\CDivT{\CDiv_T}
\def\fan{\Delta}

\def\n0{{\bf n_0}}

\providecommand{\pair}[2]{\langle#1,#2\rangle}
\def\transp #1{\vphantom{#1}^{\mathrm t}\! {#1}}


\begin{document}

\title[Three Dimensional Monomial Maps]
{On Degree Growth and Stabilization of Three Dimensional Monomial Maps}

\author{Jan-Li Lin}

\address{Department of Mathematics, Indiana University, Bloomington \\ IN 47405 \\ USA}

\email{janlin@indiana.edu}

\subjclass{}

\keywords{}

\begin{abstract}
In this paper, we develop several tools to study the degree growth and stabilization of monomial maps.
Using these tools, we can
classify semisimple three dimensional monomial maps by their dynamical behavior.
\end{abstract}

\maketitle


\section{Introduction}

Given a rational selfmap $f:X\dashrightarrow X$ on an $n$-dimensional K\"{a}hler manifold $X$, one can define a pullback map
$f^*:H^{p,p}(X)\to H^{p,p}(X)$ for $0\le p\le n$. In general, the pullback does not commute with iteration, i.e.,
$(f^*)^k\ne(f^k)^*$. Following Sibony~\cite{S} (see also~\cite{FS}), we call the map $f$ (algebraically) \emph{stable} if 
the action on the cohomology of $X$ is compatible with iterations.
More precisely, $f$ is called {\em $p$-stable} 
if the pullback on $H^{p,p}(X)$ satisfies
$(f^*)^k=(f^k)^*$ for all $k\in\N$.

If $f$ is not $p$-stable on $X$, one might try to find a birational change of coordinate $h: X'\dashrightarrow X$
such that $\tilde{f} = h^{-1}\circ f\circ h$ is $p$-stable. This is not always possible even for $p=1$, as shown by
Favre~\cite{Fa}. However, for $p=1$ and $n=2$, one can find such a stable model (with at worst quotient singularities)
for quite a few classes of surface maps \cite{DF,FJ}.
Also for $p=1$, such model can be obtained for certain monomial maps \cite{Fa,JW,ljl}.

For an $n\times n$ integer matrix $A=(a_{i,j})$, the associated monomial map $f_A:(\C^*)^n\to(\C^*)^n$ is defined by
\[
\textstyle{f_A(x_1,\cdots,x_n)=( \prod_j x_j^{a_{1,j}},\cdots ,\prod_j x_j^{a_{n,j}} ).  }
\]
The morphism $f_A$ extends to a rational map, also denoted $f_A$, on any $n$-dimensional toric variety. The question of
finding a stable model for $f_A$ (or showing there is no stable model for certain $f_A$) has been studied in \cite{Fa,JW,ljl}.
In particular, for dimension two, the stabilization problem if fully classified in \cite{Fa} and \cite{JW}.

In this paper, we focus on the case when $n=3$ and $A$ is diagonalizable.
We deal both the $1$-stable and the $2$-stable problems. There are more cases than
dimension two. A main case where a model that is both $1$-stable and $2$-stable can be obtained by
performing proper modification is summarized in the following theorem.

\begin{thm}
\label{thm:main_1}
Let $\fan$ be a fan in $N\cong \Z^3$, and $f_A: X(\fan) \dashrightarrow X(\fan)$ be the monomial map
associated to $A$.
If $A$ is diagonalizable, and for each eigenvalue $\mu$ of $A$, $\mu/\bar{\mu}$ is a root of unity.
Then
there exists a complete simplicial refinement $\fan'$ of $\fan$ and $k_0\in\N$ such that
the map $f_A^k: X(\fan') \dashrightarrow X(\fan')$ is both 1-stable and 2-stable
for all $k\ge k_0$.
\end{thm}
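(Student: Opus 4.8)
The plan is to reduce both stability statements to a combinatorial condition on the pair $(\fan',A)$ and then to build $\fan'$ from the $A$-invariant subspaces of $N_\R$. Using the tools developed above, the pullbacks $(f_A^k)^*$ on $H^{1,1}(X(\fan'))$ and on $H^{2,2}(X(\fan'))$ are computed from the way the linear maps $A^k$ and $\wedge^2A^k$ move the cones of $\fan'$; failure of $p$-stability occurs exactly when some cone is forced to collapse into the indeterminacy locus under iteration, so that $(f_A^*)^k$ and $(f_A^k)^*$ differ by the class of a contracted subvariety. Accordingly, I would first restate the theorem as: produce a complete simplicial refinement $\fan'$ of $\fan$ such that, for all $k\ge k_0$, each cone of $\fan'$ is carried by $A^k$ (resp.\ $\wedge^2A^k$) into a subfan compatibly with composition, i.e.\ no new collapse is created beyond stage $k_0$.

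The key use of the hypothesis is to remove rotation from the projectivized dynamics. Since $A$ is diagonalizable and every eigenvalue $\mu=|\mu|e^{i\theta}$ has $\mu/\bar\mu=e^{2i\theta}$ equal to a root of unity, each argument $\theta$ lies in $\pi\Q$, so there is a suitable $k_0$ with $\mu^{k_0}\in\R_{>0}$ for every eigenvalue at once. After replacing $A$ by any power $\ge k_0$, the induced action on $\P(N_\R)$ has only attracting and repelling fixed loci at the real eigendirections and their spans, with no irrational rotation; the same holds for $\wedge^2A$, because the ratios $\mu_i\mu_j/\overline{\mu_i\mu_j}$ are again roots of unity, so a common $k_0$ serves for both $H^{1,1}$ and $H^{2,2}$. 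In dimension three this leaves two cases, three real eigenvalues, and one real eigenvalue with a conjugate pair; in either case $A$ preserves a flag of rational subspaces of $N_\R$ (an invariant line inside an invariant plane when the eigenvalues are rational, and the rational plane $\Span_\R(\mathrm{Re}\,v,\ \mathrm{Im}\,v)$ attached to a complex pair), which I will use to guide the refinement.

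I would then construct $\fan'$ as a complete simplicial fan refining $\fan$ whose rays accumulate on the eigendirections and whose maximal cones are subordinate to the rational invariant flag. Concretely, near each attracting or repelling eigendirection I subdivide finely enough that the monotone real dynamics of $A^{k_0}$ pushes every cone strictly toward the dominant eigendirection, so that the forward images $A^k\sigma$ stabilize as unions of cones of $\fan'$ for all $k\ge k_0$; completeness and simpliciality are then arranged by a standard toric subdivision respecting this structure. The same refinement also yields $2$-stability: the invariant flag governing $\wedge^2A$ is determined by that of $A$, and the common $k_0$ above already aligns the $\wedge^2A$-eigendirections. Verifying the combinatorial criterion on this $\fan'$ then gives simultaneously the $1$- and $2$-stability of $f_A^k$ for all $k\ge k_0$.

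The step I expect to be the main obstacle is the irrationality of the eigendirections: when the real eigenvalues are Galois-conjugate irrationals there is no proper rational invariant subspace to anchor the rays, so the fan cannot literally contain the eigendirections and one must instead quantify how finely to subdivide near them. The delicate point is to make the subdivision uniform, so that the absence of new collapses holds for every $k\ge k_0$ at once rather than for a single power; controlling this uniformity, using that the projectivized dynamics is a rotation-free attracting--repelling system with finitely many invariant directions, is where the real work lies.
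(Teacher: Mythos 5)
Your opening reductions are sound and match steps the paper actually takes: passing to a power $A^{k_0}$ whose eigenvalues are all real and positive is exactly how the paper removes rotation, and your observation that the eigenvalue ratios of $\wedge^2A$ are again roots of unity parallels the paper's duality between $p$-stability of $f_A$ and $(n-p)$-stability of $f_{A'}$, $A'=|\det A|\,A^{-1}$ (Corollary~\ref{cor:dual_stable}). The gap is in the geometric core of your construction. You assert that after powering, the projectivized dynamics is ``a rotation-free attracting--repelling system with finitely many invariant directions,'' and you build $\fan'$ by subdividing finely near those finitely many eigendirections. This premise is false precisely in the cases the hypothesis is designed to include: once two eigenvalues have equal modulus (e.g.\ $2,-2,1$, which satisfies the hypothesis since real eigenvalues do automatically), powering makes them \emph{equal}, giving spectrum of the form $(r,r,s)$. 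Then the fixed locus in $\P(N_\R)$ contains a whole projective line, and the forward orbit of a ray $\gamma=\R_+v$ converges to the projection of $v$ onto the dominant eigenplane --- a limit direction that depends on $\gamma$ and is generically irrational. No finite subdivision ``near the eigendirections'' can then make every cone closure $\overline{A^n(\gamma)}$ map regularly (the criterion of Theorem~\ref{thm:sas}); what is needed is, for each ray, a rational $A$-invariant cone that absorbs its orbit. The paper produces these by a device absent from your proposal: the orbit of $\gamma$ lies in the rational $A$-invariant plane $\Gamma_\gamma=\Span\{v,Av\}$, inside which the dynamics is two-dimensional and hyperbolic, so one can refine $\Gamma_\gamma\cap\fan$ so that rays map to rays or into invariant two-dimensional cones, and then take a common refinement without adding rays (Proposition~\ref{prop:two_real_pos_eval} and the discussion following it).

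Your scheme also says nothing usable when all three moduli coincide. There $A^{k_0}$ is a scalar matrix, the projectivized dynamics of the power is the identity, and there is no dominant direction to push toward; yet stability is required for all $k\ge k_0$, and $A^{k_0+1}$ moves rays exactly as $A$ does. What is needed is a refinement of $\fan$ invariant under the finite subgroup of $PGL^+_n(\Q)$ generated by the image of $A$ --- the resolution-of-indeterminacy-of-pairs / Brylinski-type equivariant subdivision (Proposition~\ref{thm_res_fan}, Corollary~\ref{cor:same_modulus}) --- not an attracting--repelling argument. Finally, even in the one case your picture does fit (three distinct moduli), the uniform absorbing cones around irrational eigendirections that you defer as ``where the real work lies'' are precisely the adapted systems of cones of Jonsson--Wulcan, which the paper imports wholesale (Theorem~\ref{thm:JW}, Corollary~\ref{cor:stables}); so that case, too, remains unproved in your proposal.
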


We note that for some subcases of the above case, even better results will hold.
For example, if the eigenvalues have different modulus, then we can make $X(\fan')$ smooth and projective.
Another particularly nice subcase is when
the set $\{A^k|k\in\N\}$ is a finite set, then in fact we can find a smooth projective
$X(\fan')$ such that $f_A$ is an automorphism on $X(\fan')$. This is related to the resolution of indeterminacy of pairs
for birational maps. See sections~\ref{sec:stab}--\ref{sec:proof_main_thm} for more details.

The case where there are two complex eigenvalues $\mu,\bar{\mu}$ of $A$ with $\mu/\bar{\mu}$ not a root of unity is more complicated.
We discuss this case in section~\ref{sec:complicated_cases}. Nevertheless, this case contains several interesting subcases.
For instance, the following phenomenon can happen.
\begin{itemize}
\item We can have the case where $f_A$ being $1$-stable on a smooth projective variety, but does not have a $2$-stable model, and vice versa.
\item There is also the case where given a toric variety $X$, there is a birational model for $f_A$ that is stable, but $f_A$
       cannot be made stable by performing the blowup process on $X$.
\item It is also possible to have $f_A$ which has no $1$-stable model and no $2$-stable model.
\end{itemize}

The paper is organized as follows. In section~\ref{sec:toric}, we review known facts about toric varieties and monomial maps, then
we developed some technical tools about stabilization and the degree sequence in section~\ref{sec:stab}. A case which is related
to the resolution of indeterminacy of pairs is studied in section~\ref{sec:res_indet}. Theorem~\ref{thm:main_1} is then proved in
section~\ref{sec:proof_main_thm}. Finally, the more complicated case where
there are two complex eigenvalues $\mu,\bar{\mu}$ of $A$ with $\mu/\bar{\mu}$ not a root of unity,
is discussed in section~\ref{sec:complicated_cases}.

\begin{acknowledgement}
The work is completed while the author was visiting the Institute for Computational and Experimental Research in Mathematics (ICERM).
The author would like to thank ICERM for hospitality.
I would also like to thank Elizabeth Wulcan for helpful discussions.
\end{acknowledgement}


\section{Preliminaries on Toric Varieties}
\label{sec:toric}

A toric variety is a (partial) compactification of the torus $T\cong (\C^*)^m$, which contains $T$ as a dense subset and which admits an action of $T$ that extends the natural action of $T$ on itself. We briefly recall some of the basic definitions. All results stated in this section are known,
so the proofs are omitted. We refer the readers to~\cite{Fu} and~\cite{O} for details about toric varieties.

\subsection{Fans and toric varieties}

Let $N$ be a lattice isomorphic to $\Z^n$ and let $M=\Hom(N,\Z)$
denote the dual lattice. The algebraic torus $T=T_N\cong (\C^*)^n$ is canonically identified
with the group $\Hom_\Z(M,\C^*)$.
Set $N_\Q:=N\otimes_\Z \Q$,
$N_\R:=N\otimes_\Z \R$, and define $M_\Q$ and $M_\R$ analogously. Let $\R_+$ and $\R_-$ denote the sets of
non-negative and non-positive numbers, respectively.

A \emph{convex rational polyhedral cone} $\sigma$ is of the form $\sigma=\sum\R_+v_i$ for some $v_i\in N$.
We will simply say that $\sigma$ is a cone \emph{generated}
by the vectors $v_i$.
If $\sigma$ is convex and does not contain any line in $N_\R$ it is said to be \emph{strictly convex}.
A \emph{face} of $\sigma$ is the intersection of
$\sigma$ and a supporting hyperplane. The \emph{dimension} of $\sigma$
is the dimension of the linear space spanned by
$\sigma$. One-dimensional cones are called \emph{rays}. Given a ray
$\gamma$, the associated \emph{ray generator} is the first nonzero lattice
point on $\gamma$.
A $k$-dimensional cone is \emph{simplicial}
if it can be generated by $k$ vectors. A cone is \emph{regular} if it
is generated by part of a basis for $N$.

A \emph{fan} $\fan$ in $N$ is a finite collection of rational
strongly convex cones in $N_\R$ such that each face of a cone in
$\fan$ is also a cone in $\fan$, and the intersection of
two cones in $\fan$ is a face of each of them. Let $\fan(k)$ denote
the set of cones in $\fan$ of dimension $k$.
A fan $\fan$ determines a toric variety $X(\fan)$ by patching together affine
toric varieties $U_\sigma$ corresponding to the cones
$\sigma\in\fan$.
The \emph{support} $|\fan|$ of the fan $\fan$ is the union of all
cones of $\fan$. In fact, given any collection of cones $\Sigma$, we
will use $|\Sigma|$ to denote the union of the cones in $\Sigma$.
If $|\fan|=N_\R$, then the fan $\fan$ is said to be \emph{complete}.
If all cones in $\fan$ are simplicial then $\fan$ is said to be
\emph{simplicial}, and if all cones are regular, $\fan$ is said to
be \emph{regular}.
A fan $\fan'$ is a \emph{refinement} of $\fan$ if for each cone $\sigma'$ in $\fan'$
there is a cone $\sigma\in\fan$ such that $\sigma'\subset\sigma$.

A toric variety $X(\fan)$ is compact if and only if $\fan$ is
complete. If $\fan$ is simplicial, then $X(\fan)$ has at worst quotient
singularities, i.e., $X(\fan)$ is an orbifold. $X(\fan)$ is smooth (non-singular) if and only if $\fan$
is regular.
For any fans $\fan_1$ and $\fan_2$ in $N$ there is a common refinement so
that there are resolutions of singularities $X(\fan')\to
X(\fan_j)$; in particular $X(\fan_1)$ and
$X(\fan_2)$ are birationally equivalent.

\subsection{Monomial maps and the condition for being stable}

Suppose $A:N\to N'$ is a homomorphism of lattices, $\fan$ is a fan in $N$, and $\fan'$ is a fan in $N'$.
A homomorphism of lattices $A:N\to N'$ induces a group homomorphism $f_A: T_{N}\to T_{N'}$ which is
given by monomials on each coordinate. One can extend $f_A$ to an {\em equivariant rational map}
$f_A:X(\fan)\dashrightarrow X(\fan')$. On a complete toric variety, $f_A$ is {\em dominant} if and only if
$A_\R=(A\otimes\R):N_\R\to N'_\R$ is surjective. The map $f_A$ is called {\em semisimple} if $A$
is diagonalizable.
Given a cone $\sigma\in\fan$, we say that $\sigma$ {\em maps regularly} to $\fan'$ by $A$ if there is a cone
$\sigma'\in\fan'$ such that $A(\sigma)\subseteq\sigma'$. In this case, we call the smallest such cone in $\fan'$
the {\em cone closure} of the image of $\sigma$, and denote it by $\overline{A(\sigma)}$.

For a complete toric variety $X(\fan)$ associated to a complete fan $\fan$, the group of torus invariant Cartier divisors
on $X(\fan)$ is denoted by $\CDiv_T(X(\fan))$, and the {\em Picard group} denoted by $\Pic(X(\fan))$. Given a
monomial map $f_A:X(\fan)\dashrightarrow X(\fan')$ we can define a {\em pullback map}
$f_A^*:\Pic(X(\fan))\to \Pic(X(\fan'))$, as well as the pullback map
$f_A^*:\CDiv_T(X(\fan))\to \CDiv_T(X(\fan'))$.
A toric rational map $f_A:X(\fan)\dashrightarrow X(\fan)$ is {\em strongly $1$-stable} if
$(f_A^k)^*=(f_A^*)^k$ as maps of $\CDivT(X(\fan))_\Q$ for all $k\in\N$. It is {\em $1$-stable} if
$(f_A^k)^*=(f_A^*)^k$ as maps of $\Pic(X(\fan))_\Q$, for all $k$.
On a projective toric variety, $f_A$ is strongly 1-stable if and only if it is 1-stable.
The following theorem gives a geometric condition on being strongly $1$-stable.

\begin{thm}\label{thm:sas}
A toric rational map $f_A:X(\fan)\dashrightarrow X(\fan)$ is strongly $1$-stable if and only if
for all ray $\tau\in\fan(1)$ and for all $n\in\N$, $\overline{A^n(\tau)}$ maps regularly to $\fan$ by $A$.
\end{thm}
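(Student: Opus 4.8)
The plan is to translate the statement about $\CDivT(X(\fan))_\Q$ into a purely combinatorial statement about piecewise-linear support functions on $\fan$, and then run a single induction on the number of iterates.

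First I would record the combinatorial description of the pullback. Recall that a torus-invariant $\Q$-Cartier divisor on $X(\fan)$ is the same datum as a support function $\psi$, continuous on $|\fan|=N_\R$, linear on each cone of $\fan$, with $-\psi(v_\tau)$ equal to the coefficient of $D_\tau$ at the primitive generator $v_\tau$ of each ray $\tau\in\fan(1)$. To compute $f_A^*\psi$ I would resolve the indeterminacy torically: choose a refinement $\tilde\fan$ of $\fan$ for which every cone of $\tilde\fan$ maps regularly to $\fan$ by $A$, so that $f_A$ factors as $X(\tilde\fan)\xrightarrow{g}X(\fan)$ with $g$ a morphism and $\pi\colon X(\tilde\fan)\to X(\fan)$ the refinement map, giving $f_A=g\circ\pi^{-1}$ and $f_A^*=\pi_*g^*$. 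Since $g$ is the toric morphism attached to $A$, one has $g^*\psi=\psi\circ A$ as a support function on $\tilde\fan$, and $\pi_*$ simply retains the coefficients along the rays of $\fan$. Hence, as an element of $\CDivT(X(\fan))_\Q$, $f_A^*\psi$ is represented by the unique support function on $\fan$ whose value at each ray generator is $(f_A^*\psi)(v_\tau)=\psi(Av_\tau)$. Establishing this formula cleanly—tracking $\Q$-Cartierness and the behaviour of pushforward under refinement—is the one genuinely technical point; everything afterward is bookkeeping with this formula.

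With the formula in hand, the engine is the following claim, proved by induction on $n$: if $\overline{A^m(\tau)}$ maps regularly to $\fan$ by $A$ for every $0\le m\le n-1$, then $(f_A^*)^n\psi(v_\tau)=\psi(A^n v_\tau)$ for every support function $\psi$. For the inductive step I would peel off the first factor, setting $\psi_1:=f_A^*\psi$ so that $(f_A^*)^n\psi(v_\tau)=(f_A^*)^{n-1}\psi_1(v_\tau)$; the induction hypothesis (using regularity for $m\le n-2$) gives $(f_A^*)^{n-1}\psi_1(v_\tau)=\psi_1(A^{n-1}v_\tau)$. Now let $\sigma=\overline{A^{n-1}(\tau)}$ and write $A^{n-1}v_\tau=\sum_{\rho\in\sigma(1)}\mu_\rho v_\rho$; because $\sigma$ is the \emph{smallest} cone containing $A^{n-1}v_\tau$, the point lies in its relative interior, so every $\mu_\rho>0$. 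Using that $\psi_1$ is linear on $\sigma$ and that $\psi_1(v_\rho)=\psi(Av_\rho)$, I obtain $(f_A^*)^n\psi(v_\tau)=\sum_\rho\mu_\rho\psi(Av_\rho)$, whereas $(f_{A^n})^*\psi(v_\tau)=\psi(A^n v_\tau)=\psi(\sum_\rho\mu_\rho Av_\rho)$. These agree for all $\psi$ precisely when $\psi$ is linear on the points $\{Av_\rho\}_{\rho\in\sigma(1)}$, i.e. when $A(\overline{A^{n-1}(\tau)})$ lies in a single cone of $\fan$—exactly the regularity hypothesis for $m=n-1$. This closes the induction, and since $(f_{A^n})^*\psi(v_\tau)=\psi(A^n v_\tau)$ always holds, the two support functions $(f_A^*)^n\psi$ and $(f_A^n)^*\psi$ agree at every ray generator and hence coincide; this is strong $1$-stability.

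For the converse I would argue contrapositively. If the condition fails, pick a ray $\tau$ and the minimal $n_0$ with $\overline{A^{n_0}(\tau)}$ not mapping regularly by $A$. Minimality lets me run the computation above through step $n_0$, yielding $(f_A^*)^{n_0+1}\psi(v_\tau)=\sum_\rho\mu_\rho\psi(Av_\rho)$ with all $\mu_\rho>0$, against $(f_{A^{n_0+1}})^*\psi(v_\tau)=\psi(\sum_\rho\mu_\rho Av_\rho)$. Since $A(\overline{A^{n_0}(\tau)})$ is contained in no cone of $\fan$, the points $Av_\rho$ are not co-conical, and I would exhibit a single support function that is nonlinear across a wall separating them, so the two expressions differ and $f_A$ fails to be strongly $1$-stable at the iterate $n_0+1$. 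I expect the main obstacle to be the pullback formula of the second paragraph—making precise that $\pi_*g^*$ acts at ray generators by $\psi\mapsto\psi\circ A$ on $\CDivT(\cdot)_\Q$, including the passage between Cartier and Weil data on a possibly non-regular but simplicial fan—while a secondary point is the separation step in the converse: that when the $\{Av_\rho\}$ fail to lie in a common cone one really can choose a witnessing $\psi$, where the positivity $\mu_\rho>0$ afforded by using the cone closure is exactly what guarantees every ray participates.
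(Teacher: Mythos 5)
Your overall strategy is the one used in the sources the paper cites for this theorem (the paper itself gives no proof of Theorem~\ref{thm:sas}, deferring to \cite{JW,ljl}): compute $f_A^*$ on $\CDivT(X(\fan))_\Q$ through support functions, so that $(f_A^*\psi)(v_\tau)=\psi(Av_\tau)$ at ray generators, then induct on iterates using linearity on cone closures. Your forward direction is sound: the positivity of the $\mu_\rho$ (because $A^{n-1}v_\tau$ lies in the \emph{relative interior} of its cone closure) together with linearity of $\psi$ on a cone containing $A(\overline{A^{n-1}(\tau)})$ is exactly the right mechanism. The genuine gap is the separation step of the converse, which you flag but do not carry out, and whose stated justification is invalid as written: from ``the $Av_\rho$ do not lie in a common cone'' you pass to ``a support function nonlinear across a wall separating them makes the two sides differ.'' Mere nonlinearity across one wall does not give this: the quantity $\sum_\rho\mu_\rho\psi(Av_\rho)$ aggregates contributions from several points lying in possibly several different cones, and for a particular nonlinear $\psi$ these discrepancies can cancel against $\psi\bigl(\sum_\rho\mu_\rho Av_\rho\bigr)$. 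Two standard repairs exist. (i) If $X(\fan)$ is projective, take $\psi$ to be the support function of an ample divisor; such $\psi$ is strictly convex, so all convexity defects across walls have the same sign, and with all $\mu_\rho>0$ one gets a \emph{strict} inequality $\sum_\rho\mu_\rho\psi(Av_\rho)\ne\psi\bigl(\sum_\rho\mu_\rho Av_\rho\bigr)$ as soon as the $Av_\rho$ are not co-conical; no cancellation can occur. (ii) If $\fan$ is merely simplicial, support functions are exactly arbitrary assignments of values at ray generators; if the equality held for every such assignment, then matching, ray by ray, the (nonnegative) coefficients with which each ray value enters the two sides forces every ray of the minimal cone containing each $Av_\rho$ to be a ray of the minimal cone $\sigma_w$ containing $w=A^{n_0+1}v_\tau$, whence $Av_\rho\in\sigma_w$ for all $\rho$ --- contradicting the failure of regularity. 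Here the positivity $\mu_\rho>0$ plays precisely the role you intuited, but the argument has to be made.

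Relatedly, the $\Q$-Cartier point you describe as technical bookkeeping is in fact where a hypothesis must enter, because the statement is \emph{false} for arbitrary fans: there exist complete, non-simplicial toric threefolds with no nontrivial line bundles (an example is recorded in \cite{Fu}). For such $X(\fan)$ one has $\CDivT(X(\fan))_\Q=M_\Q$, so every support function is linear, hence $f_A^*\psi=\psi\circ A$ and $(f_A^*)^k=(f_A^k)^*$ for \emph{every} matrix $A$, while the cone-closure condition fails for suitable $A$ (e.g.\ one acting with an irrational rotation). Thus both the well-definedness of the iteration $(f_A^*)^k$ (pullbacks must remain $\Q$-Cartier, which is automatic only when $\fan$ is simplicial) and the separation step above require a simplicial or projective hypothesis; this is the standing assumption in \cite{JW,ljl}, and your proof should state it explicitly, after which both directions go through as you outline.
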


For the proof of Theorem~\ref{thm:sas}, as well as more details on the $1$-stability of monomial maps, see
\cite{JW,ljl}. Notice that what we call $1$-stable here is called (algebraically) stable in these paper.


\section{Degrees and Stabilization}
\label{sec:stab}

In this section, we show various results about the degrees and stabilization of monomial maps.
Although they serve as tools to prove Theorem~\ref{thm:main_1}, these results hold not only in
dimension three, but, in many cases, in arbitrary dimensions.

\subsection{Duality between $p$ and $(n-p)$}

For $A\in M_n(\Z)$, let
\[
A'=|\det(A)|\cdot A^{-1} = \sgn(\det (A))\cdot\ad(A),
\]
where $\sgn(.)$ is the sign function,
and $\ad(A)$ is the classical adjoint matrix of $A$.
Notice that $A'\in M_n(\Z)$, and if $\det(A)\ne 0$, then
$\det(A')=\det(A)^{n-1}$ is also nonzero.

\begin{prop}
For a monomial map $f_A$, its pullback map $f_A^*$ on $H^{p,p}$ is adjoint, up to a scaler,
to the pullback map $f_{A'}^*$ on $H^{n-p,n-p}$ under the intersection pairing.

More precisely, let $\pair{\,}{}$ be the intersection pairing, $\alpha\in H^{p,p}(X(\fan))$, $\beta\in H^{n-p,n-p}(X(\fan))$,
then we have
\[
\pair{f_A^*\alpha}{\beta} = |\det(A)|^{p-n+1}\cdot \pair{\alpha}{f_{A'}^*\beta}.
\]
\end{prop}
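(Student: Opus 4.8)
The plan is to convert the asserted pairing identity into a comparison between the \emph{pushforward} $(f_A)_*$ and the pullback $f_{A'}^*$, and then to verify that comparison on the torus-invariant classes that span the cohomology. First I would invoke the projection formula. The monomial map $f_A$ is proper, dominant and generically finite of degree $|\det A|$; resolving it to a toric morphism on an equivariant refinement of $\fan$ (so that every cone maps regularly, cf. Theorem~\ref{thm:sas}), one has $\int_X (f_A)_*\gamma=\int_X\gamma$ and $(f_A)_*(f_A^*\alpha\cup\beta)=\alpha\cup(f_A)_*\beta$. Integrating gives the adjunction
\[
\pair{f_A^*\alpha}{\beta}=\pair{\alpha}{(f_A)_*\beta},\qquad \alpha\in H^{p,p},\ \beta\in H^{n-p,n-p}.
\]
Working with $\Q$-coefficients (or passing to a smooth projective refinement) so that Poincar\'e duality and a perfect intersection pairing are available, the Proposition becomes equivalent to the operator identity
\[
(f_A)_* = |\det A|^{\,p-n+1}\, f_{A'}^* \qquad\text{on } H^{n-p,n-p}(X(\fan)).
\]

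The heart of the matter is this operator identity, and the key algebraic input is $A'A=AA'=|\det A|\cdot I$, i.e. $A'=|\det A|\,A^{-1}$: up to the positive scalar $|\det A|$ the matrices $A$ and $A'$ are mutually inverse, which is precisely why pushing cones forward by $A$ must match pulling them back by $A'$. Concretely I would evaluate both sides on the orbit-closure classes $[V(\sigma')]$, $\sigma'\in\fan(n-p)$, which span $H^{n-p,n-p}$. On such a generator the toric pushforward reads $(f_A)_*[V(\sigma')]=d_{\sigma'}\,[V(\overline{A(\sigma')})]$, nonzero exactly when $A$ does not drop the dimension of $\sigma'$, where $d_{\sigma'}$ is the lattice index of $A$ between the relevant quotient lattices; and the Fulton--Sturmfels pullback $f_{A'}^*[V(\sigma')]$ is supported on the \emph{same} cones, since $A'(\sigma'')\subseteq\sigma'$ is equivalent to $A^{-1}(\sigma'')\subseteq\sigma'$, i.e. to $\sigma''\subseteq A(\sigma')$. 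Comparing the two multiplicities then reduces to a lattice-index identity, which I would settle using $A'=|\det A|\,A^{-1}$ together with the higher-adjugate relation: under the pairing $\wedge^{p}N\otimes\wedge^{n-p}N\to\wedge^{n}N$ one has $\wedge^{\,n-p}(\ad A)=(\det A)^{\,n-p-1}\,\wedge^{p}A$, and the exponent $n-p-1$ is exactly what produces the claimed $|\det A|^{\,p-n+1}=|\det A|^{-(n-p-1)}$.

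The scalar can be cross-checked (and, once proportionality of $(f_A)_*$ and $f_{A'}^*$ is in hand, even pinned) by the safe degree relations $(f_A)_*f_A^*=|\det A|\cdot\id$ and $(f_{A'})_*f_{A'}^*=|\det A|^{\,n-1}\cdot\id$, and by the endpoint cases $p=0$ and $p=n$: there $f_A^*$ and $f_{A'}^*$ act on $H^{0,0}$ and $H^{n,n}$ by the evident degrees $1$ and $|\det A|$ (resp.\ $1$ and $|\det A|^{\,n-1}$), and one checks directly that both sides of the identity agree.

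The hard part will be the multiplicity bookkeeping, together with one structural subtlety. The subtlety is that I must \emph{not} shortcut the proof by writing $f_{A'}^*f_A^*=(f_A\circ f_{A'})^*=f_{|\det A|\,I}^*$: functoriality of pullback under composition is exactly what fails for rational maps --- it is the stability phenomenon this paper studies --- so the comparison has to be routed through the pushforward $(f_A)_*$, which \emph{is} well defined on cohomology via any resolution. The genuine labor is then to verify that the lattice indices coming from the toric pushforward of $f_A$ and from the Fulton--Sturmfels pullback of $f_{A'}$ combine to precisely $|\det A|^{\,p-n+1}$ on every generator $[V(\sigma')]$, uniformly in $\sigma'$; controlling these indices on a non-regular (merely simplicial) fan is where the real care is needed, and is handled by the $\Q$-coefficient Poincar\'e duality noted above.
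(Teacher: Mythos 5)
Your opening reduction is fine: defining $(f_A)_* := (\tilde f_A)_*\circ\pi^*$ through a resolution $\pi,\tilde f_A$ of the rational map, two applications of the projection formula do give $\pair{f_A^*\alpha}{\beta}=\pair{\alpha}{(f_A)_*\beta}$, and on a simplicial projective model (where the $\Q$-pairing is perfect) the Proposition is equivalent to the operator identity $(f_A)_*=|\det A|^{p-n+1}f_{A'}^*$, which is in fact true. The genuine gap is the generator-level computation you build on it. The formula $(f_A)_*[V(\sigma')]=d_{\sigma'}[V(\overline{A(\sigma')})]$ is false for this cohomological pushforward: since $(f_A)_*=(\tilde f_A)_*\pi^*$ and $\pi^*$ spreads a class over the exceptional cones of the resolution, the image of a single orbit closure is in general a sum over several cones. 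Concretely, take $n=2$, $X=\P^2$ with rays $e_1,e_2,e_0=-e_1-e_2$, and $A=\mathrm{diag}(2,1)$, i.e.\ $f_A[X_0:X_1:X_2]=[X_0^2:X_1^2:X_0X_2]$. Resolving by inserting the ray through $(-1,-2)$, one computes $\pi^*D_{e_1}=D_{e_1}+D_{(-1,-2)}$ upstairs, hence $(f_A)_*[D_{e_1}]=[D_{e_1}]+[D_{e_0}]$: two terms, not a multiple of $[D_{e_1}]$ alone. The naive strict-transform pushforward does give $[D_{e_1}]$, but it violates your adjunction, since $\pair{f_A^*[D_{e_2}]}{[D_{e_1}]}=2\ne 1$; so you cannot substitute it either. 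For the same reason your support description of $f_{A'}^*[V(\sigma')]$ by the condition $A'(\sigma'')\subseteq\sigma'$ presumes a morphism-style Fulton--Sturmfels formula that a rational map does not have (its pullback must itself be computed through a resolution), and your ``safe'' cross-check $(f_A)_*f_A^*=|\det A|\cdot\id$ fails as well: in the example, $(f_A)_*f_A^*[D_{e_2}]=4[D_{e_2}]$, not $2[D_{e_2}]$. This is the very loss of functoriality you correctly flag for pullback composed with pullback, resurfacing for pushforward composed with pullback.

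So the ``multiplicity bookkeeping'' you defer is not merely laborious; as set up it starts from false formulas, and repairing it means computing both operators on resolutions, which is essentially the original problem. The paper's proof avoids orbit closures entirely: on a simplicial projective model every class is generated by differences of ample divisors, so it suffices to test the identity on $\alpha=[D_1]\cdots[D_p]$, $\beta=[D_{p+1}]\cdots[D_n]$ with $D_i$ ample with polytope $P_i$; then $\pair{f_A^*\alpha}{\beta}=n!\,MV(\transp{A}P_1,\ldots,\transp{A}P_p,P_{p+1},\ldots,P_n)$, and the scalar $|\det A|^{p-n+1}$ drops out of the transformation law $MV(\transp{A}Q_1,\ldots,\transp{A}Q_n)=|\det(A)|\,MV(Q_1,\ldots,Q_n)$ together with $\transp{A}^{-1}=\transp{A}'/|\det A|$ and multilinearity; general complete fans are then handled by the injection $A^*(X(\fan))\to A^*(X(\fan'))$ into a simplicial projective refinement. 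If you want to keep your pushforward formulation, the honest route to the operator identity $(f_A)_*=|\det A|^{p-n+1}f_{A'}^*$ is again through this mixed-volume computation, not through cone-by-cone lattice indices.
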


\begin{proof}
First, assume that $X(\fan)$ is simplicial and projective. Under this assumption, $H^{*,*}(X(\fan))\cong A^*(X(\fan))$
is generated by $\Pic(X(\fan))_\R$. Moreover, every divisor in a projective variety is a difference of two ample divisors.
Therefore, it suffices to prove the equation for products of ample divisors of $X(\fan)$.
Recall the following diagram for toric rational map $f_A$.
\[
\xymatrix{
& X(\tilde{\fan})\ar[ld]_{\pi}\ar[rd]^{\tilde{f}_A}\\
X(\fan)\ar@{-->}[rr]_{f_A} & & X(\fan)
}
\]

Let $\alpha = [D_1] . \cdots . [D_p]$ and $\beta = [D_{p+1}] . \cdots . [D_n]$, where $D_i$ is an ample divisor with associated
polytope $P_i$, $i=1,\cdots,n$. Then, as a consequence of the Riemann-Roch theorem for toric varieties 
(see \cite[pp.116--117]{Fu}), we have
\[
\begin{split}
\pair{f_A^*\alpha}{\beta}
&= \pi_*\tilde{f}_A^*(D_1 . \cdots . D_p).(D_{p+1} . \cdots . D_n) \\
&= \tilde{f}_A^*(D_1 . \cdots . D_p).\pi^*(D_{p+1} . \cdots . D_n) \\
&= n!\cdot MV(\transp{A} P_1,\cdots,\transp{A} P_p, P_{p+1},\cdots, P_n) \\
&= n!\cdot |\det(\transp{A})|\cdot MV(P_1,\cdots,P_p, \transp{A}^{-1} P_{p+1},\cdots, \transp{A}^{-1} P_n) \\
&= |\det(\transp{A})|^{p-n+1}\cdot n!\cdot MV(P_1,\cdots,P_p, \transp{A}' P_{p+1},\cdots, \transp{A}' P_n) \\
&= |\det(\transp{A})|^{p-n+1}\cdot \pair{\alpha}{f_{A'}^*\beta}
\end{split}
\]
Here $MV(\cdots)$ denotes the mixed volume of polytopes.

Finally, for a general toric variety $X(\fan)$, we can subdivide $\fan$ to obtain a refinement $\fan'$ such that $X(\fan')$
is simplicial and projective. The induced map $A^*(X(\fan))\to A^*(X(\fan'))$ will be injective, and the formula for $X(\fan)$
follows from the result for $X(\fan')$.
\end{proof}
We list two consequences of the proposition.

\begin{cor}
\label{cor:dual_stable}
The map $f_A$ is $p$-stable if and only if $f_{A'}$ is $(n-p)$-stable.
\end{cor}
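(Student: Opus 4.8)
The plan is to read the corollary straight off the adjointness formula of the preceding proposition, by tracking how the two operators $(f_A^k)^*$ and $(f_A^*)^k$ transform under the intersection pairing. Write $c := |\det(A)|^{p-n+1}$, which is nonzero since $A$ is invertible. Two purely algebraic observations drive everything. First, monomial maps compose multiplicatively, so $f_A^k = f_{A^k}$. Second, the operation $B\mapsto B'$ is multiplicative on powers: from $(A^k)^{-1} = (A^{-1})^k$ and $|\det(A^k)| = |\det(A)|^k$ one gets $(A^k)' = (A')^k$, and likewise $|\det(A^k)|^{p-n+1} = c^k$.

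First I would record two pairing identities for $\alpha \in H^{p,p}(X(\fan))$ and $\beta \in H^{n-p,n-p}(X(\fan))$. Applying the proposition to the matrix $A^k$ and rewriting $(A^k)' = (A')^k$ together with $f_{(A')^k} = f_{A'}^k$ gives
\[
\pair{(f_A^k)^*\alpha}{\beta} = c^k\cdot\pair{\alpha}{(f_{A'}^k)^*\beta}.
\]
Applying the proposition to $A$ once and then iterating---each application carries one copy of $f_A^*$ across the pairing, producing a factor $c$ and replacing it by a copy of $f_{A'}^*$---gives
\[
\pair{(f_A^*)^k\alpha}{\beta} = c^k\cdot\pair{\alpha}{(f_{A'}^*)^k\beta}.
\]
Subtracting the two yields the single symmetric identity
\[
\pair{\bigl[(f_A^k)^*-(f_A^*)^k\bigr]\alpha}{\beta} = c^k\cdot\pair{\alpha}{\bigl[(f_{A'}^k)^*-(f_{A'}^*)^k\bigr]\beta},
\]
which already exhibits the desired equivalence.

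To conclude, recall that $p$-stability of $f_A$ means precisely that the bracketed operator on the left vanishes for all $k$, while $(n-p)$-stability of $f_{A'}$ means the bracketed operator on the right vanishes for all $k$. Since $c^k\ne 0$, if the left operator is zero then $\pair{\alpha}{\bigl[(f_{A'}^k)^*-(f_{A'}^*)^k\bigr]\beta}=0$ for all $\alpha$, and non-degeneracy of the intersection pairing in the $\alpha$-slot forces the right operator to vanish; the converse runs identically using non-degeneracy in the $\beta$-slot. The only delicate point---and the main obstacle---is this non-degeneracy of the pairing $H^{p,p}\times H^{n-p,n-p}\to\C$. It is Poincar\'e duality, which holds over $\Q$ for a complete simplicial $X(\fan)$, where the cohomology sits entirely in even degree and is all of type $(p,p)$; exactly as in the proposition, the general case reduces to this one by refining $\fan$ to a simplicial projective fan and using the injectivity of the induced map on cohomology.
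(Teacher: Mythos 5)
Your proof is correct and follows essentially the same route as the paper: apply the adjointness proposition both to $A^k$ and iteratively to $A$, use $(A^k)'=(A')^k$ so the constants $c^k$ match on both sides, and invoke non-degeneracy of the intersection pairing to pass between equality of pairings and equality of operators. The only difference is presentational: you make explicit the Poincar\'e duality point that the paper's chain of equivalences leaves implicit.
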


\begin{proof}
Notice that
\[
\begin{split}
  &\  (f_A^*)^k = (f_A^k)^* \\
 \Longleftrightarrow &\  \pair{(f_A^*)^k\alpha}{\beta} = \pair{(f_A^k)^*\alpha}{\beta},
        \text{ $\forall \alpha\in H^{p,p}$, $\beta\in H^{n-p,n-p}$} \\
 \Longleftrightarrow &\  \pair{\alpha}{(f_{A'}^*)^k\beta} = \pair{\alpha}{f_{(A^k)'}^*\beta}\\
 \Longleftrightarrow &\  (f_{A'}^*)^k = (f_{A'}^k)^*.
\end{split}
\]
The last implication is because we have $(A^k)'=(A')^k$ for all $k$. Thus the corollary is proved.
\end{proof}

Given an ample divisor $D$ on a projective toric variety $X(\fan)$, we define the $p$-th degree of $f_A$ with respect to $D$,
denoted $\deg_{D,p}(f_A)$, as
\[
\deg_{D,p}(f_A) = \pair{f_A^* D^p}{D^{n-p}}.
\]
We have the following relation between the $p$-th degree and the $(n-p)$-th degree of a monomial map with respect to $D$, 
since the intersection pairing is symmetric.

\begin{cor}
For any ample divisor $D$, we have
\[
\deg_{D,p}(f_A) = |\det(A)|^{p-n+1} \deg_{D,n-p}(f_{A'}).
\]
\end{cor}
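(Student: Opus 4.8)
The plan is to observe that this corollary is an immediate specialization of the preceding proposition, obtained by choosing the cohomology classes $\alpha$ and $\beta$ to be appropriate powers of the single ample class $D$. Since $D$ is a divisor, its class lies in $H^{1,1}(X(\fan))$, so $D^p\in H^{p,p}(X(\fan))$ and $D^{n-p}\in H^{n-p,n-p}(X(\fan))$, which are exactly the bidegrees required to feed into the proposition.

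First I would apply the proposition with $\alpha = D^p$ and $\beta = D^{n-p}$. This yields directly
\[
\pair{f_A^* D^p}{D^{n-p}} = |\det(A)|^{p-n+1}\cdot\pair{D^p}{f_{A'}^* D^{n-p}}.
\]
By the definition of the $p$-th degree, the left-hand side is precisely $\deg_{D,p}(f_A)$, so the only remaining task is to identify the pairing on the right with $\deg_{D,n-p}(f_{A'})$.

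For that last step I would invoke the symmetry of the intersection pairing, as already signalled in the statement: $\pair{D^p}{f_{A'}^* D^{n-p}} = \pair{f_{A'}^* D^{n-p}}{D^p}$. Since $n-(n-p)=p$, the right-hand expression is exactly $\pair{f_{A'}^* D^{n-p}}{D^{\,n-(n-p)}} = \deg_{D,n-p}(f_{A'})$ by the definition of the degree applied to the map $f_{A'}$. Substituting back gives the claimed identity.

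There is no genuine obstacle here, since the result is a direct corollary; the only points requiring a moment's care are bookkeeping ones, namely confirming that the chosen classes sit in the correct bidegrees and that the index $n-(n-p)$ collapses to $p$ so that the symmetric pairing indeed reproduces $\deg_{D,n-p}(f_{A'})$ rather than a mismatched degree.
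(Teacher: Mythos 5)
Your proof is correct and is exactly the argument the paper intends: the corollary is stated as an immediate specialization of the preceding proposition with $\alpha = D^p$, $\beta = D^{n-p}$, combined with the symmetry of the intersection pairing (which the paper itself flags in the sentence introducing the corollary). The bookkeeping step $n-(n-p)=p$ that you check is the only content beyond direct substitution, and you handle it correctly.
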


In particular, for $p=n-1$, we get
$\deg_{D,n-1}(f_A) = \deg_{D,1}(f_{A'})$.

\subsection{Stability and linear recurrence of the degree sequence}

The following property shows the relation between $p$-stable and the $p$-th degree satisfying
a linear recurrence relation.
This is probably well known to the experts. We include the result here for completeness.

\begin{prop}
\label{prop:deg_rec}
Given a projective simplicial toric variety $X=X(\fan)$,
suppose that the monomial map $f_A$ is $p$-stable on $X$.
Then for any $T_N$-invariant ample divisor $D$ of $X$, the degree sequence
$\{\deg_{D,p}(f_A^k)\}_{k=1}^\infty$ satisfies a linear recurrence relation.
\end{prop}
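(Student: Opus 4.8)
The plan is to reduce the statement to the elementary fact that, for a fixed linear operator on a finite-dimensional vector space, the scalar sequence obtained by iterating the operator on a fixed vector and then pairing against a fixed covector satisfies a linear recurrence whose characteristic polynomial is that of the operator. The only place where the hypotheses enter is in linearizing the iterated pullback, so the proof is short and the real content is bookkeeping.

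First I would fix notation. Since $X=X(\fan)$ is projective and simplicial, the space $V:=H^{p,p}(X)_\R$ is finite-dimensional, say $\dim_\R V=m$, and $\phi:=f_A^*\colon V\to V$ is a well-defined linear endomorphism. The $p$-stability hypothesis is \emph{precisely} the assertion that $(f_A^k)^*=(f_A^*)^k=\phi^k$ as endomorphisms of $V$ for every $k\in\N$. Setting $v_0:=D^p\in V$ and defining the linear functional $\lambda\colon V\to\R$ by $\lambda(\alpha):=\pair{\alpha}{D^{n-p}}$ (the intersection pairing, landing in $H^{n,n}(X)\cong\R$), the definition of the $p$-th degree gives, for each $k$,
\[
\deg_{D,p}(f_A^k)=\pair{(f_A^k)^*D^p}{D^{n-p}}=\lambda\bigl(\phi^k(v_0)\bigr).
\]

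Next I would invoke the Cayley--Hamilton theorem. Let $\chi(t)=t^m-c_{m-1}t^{m-1}-\cdots-c_1t-c_0$ be the characteristic polynomial of $\phi$; then $\chi(\phi)=0$, so $\phi^{k+m}=c_{m-1}\phi^{k+m-1}+\cdots+c_0\phi^{k}$ for all $k\ge 0$. Applying $\lambda$ to both sides evaluated at $v_0$ and using the displayed identity yields
\[
\deg_{D,p}(f_A^{k+m})=c_{m-1}\deg_{D,p}(f_A^{k+m-1})+\cdots+c_0\deg_{D,p}(f_A^{k}),
\]
which is exactly a linear recurrence relation of order $m=\dim_\R H^{p,p}(X)$ with constant coefficients equal to those of $\chi$.

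The argument is essentially forced once the two inputs---finite dimensionality of $H^{p,p}(X)$ and the linearization $(f_A^k)^*=(f_A^*)^k$---are in place, so there is no serious obstacle; the $p$-stability hypothesis is doing all the work. The one point requiring care is that $\deg_{D,p}(f_A^k)$ must be read as applying the single linear map $(f_A^k)^*$ to the fixed class $D^p\in H^{p,p}(X)$: I use only the linearity of the pullback, never any multiplicativity, so the proof does not rely on $f_A^*$ being a ring homomorphism. If one additionally wanted $\chi$ to be an explicit integer polynomial rather than merely asserting that \emph{some} recurrence exists, one could identify $\phi=f_A^*$ with the combinatorial pullback on $\CDivT(X)_\Q$ or $\Pic(X)_\Q$ and read off its eigenvalues from those of $A$; but this refinement is not needed for the statement as given.
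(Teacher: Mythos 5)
Your proof is correct and takes essentially the same route as the paper: use $p$-stability to write $\deg_{D,p}(f_A^k)=\lambda(\phi^k v_0)$ for the fixed operator $\phi=f_A^*$ on the finite-dimensional space $H^{p,p}(X)$, then apply Cayley--Hamilton to $\phi$. The paper's only difference is cosmetic bookkeeping---it chooses a basis extending $[D^p]$ whose other elements pair to zero with $D^{n-p}$, so that the degree becomes (up to the constant $D^n$) the $(1,1)$-entry of the matrix of $\phi^k$---whereas your vector/covector formulation reaches the same recurrence without constructing that basis.
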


\begin{proof}
Since $D^n = n!\Vol(P_D)>0$ for ample divisor $D$, we know the cohomology class $[D^p]$ is nonzero
in $H^{p,p}(X)$, we can extend $[D^p]$ to form a
basis $\B = \{ [D^p] , \theta_1, \cdots, \theta_r \}$ for $H^{p,p}(X)$ such that
$D^{n-p} . \theta_i = 0$ for all $i=1,\cdots,r$.
Then $\deg_{D,p}(f_A)$ is the $(1,1)$-entry of the matrix $\A$ for $f_A^*$ with respect to $\B$.
Since $f_A$ is $p$-stable, i.e., $(f_A^k)^* = (f_A^*)^k$ for all $k\in\N$, we know that
$\deg_{D,p}(f_A^k)$ is the $(1,1)$-entry of the matrix $\A^k$. Therefore, by the
Cayley-Hamilton theorem, the sequence
$\{\deg_{D,p}(f_A^k)\}_{k=1}^\infty$ satisfies the linear recurrence relation induced by the
characteristic polynomial of $\A$.
\end{proof}

However, in practice, it is usually difficult to make a birational change of coordinate to make
the map stable, even for $p=1$. Instead, we impose the following slightly weaker condition. This is
a ``stable version of $p$-stable'', thus we write $p$-SS (stably stable) for the condition.

\begin{itemize}
\item[($p$-SS)]
There exists an integer $k_0 \ge 1$, such that for all $k,l\ge k_0$, we have $(f^{k+l})^* = (f^k)^*\circ (f^l)^*$.
\end{itemize}

\begin{rem}
We have the following consequences of $p$-SS.
\begin{enumerate}
\item For all $k\ge k_0$, the map $f^k$ is $p$-stable.
\item The degree sequences $\{\deg_{D,p}(f_A^{kj+l})\}_{j=1}^\infty$ satisfy a (same) linear recurrence relation for all $k,l\ge k_0$.
The proof is similar to the proof for Proposition~\ref{prop:deg_rec}.
Thus, the sequence $\{\deg_{D,p}(f_A^k)\}_{k=1}^\infty$ also satisfies a linear recurrence relation.
\end{enumerate}
\end{rem}

\subsection{Simultaneous stabilization}

Given an $n\times n$ matrix $A\in M_n(\R)$ with $\det(A)\ne 0$,
let $\mu_1,\cdots,\mu_n$ be the eigenvalues of $A$, counting multiplicities.
We say that $A$ has {\em isolated spectrum} if $\mu_i\ne \mu_j$ for $i\ne j$, and that
$A$ has {\em absolutely isolated spectrum} if $|\mu_i|\ne |\mu_j|$ for $i\ne j$.

\begin{thm}
\label{thm:JW}
Let $\fan$ be a fan in $N\cong \Z^n$, and $A_1,\cdots,A_m\in M_n(\Z)$ be matrices with absolutely isolated spectrum.
Then there exists a complete refinement $\fan'$ of $\fan$ such that $X(\fan')$ is smooth and
projective and the induced maps $f_{A_i}: X(\fan') \dashrightarrow X(\fan')$ are $1$-SS for all $i=1,\cdots,m$.
\end{thm}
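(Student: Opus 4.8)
The plan is to reduce the assertion to a purely dynamical statement about the forward action of each $A_i$ on the space of rays in $N_\R$, and then to build a single fan adapted to all of these actions at once. First I would record the simplification coming from the hypothesis: if $A_i$ has absolutely isolated spectrum then it has no non-real eigenvalue, since a non-real eigenvalue of a real matrix occurs in a conjugate pair $\mu,\bar\mu$ with $|\mu|=|\bar\mu|$. Hence every $A_i$ has real eigenvalues $\mu_1^{(i)},\dots,\mu_n^{(i)}$ with $|\mu_1^{(i)}|>\cdots>|\mu_n^{(i)}|>0$ and a real eigenbasis $v_1^{(i)},\dots,v_n^{(i)}$. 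On $\P(N_\R)$ the induced map is then gradient-like: the dominant direction $\pm v_1^{(i)}$ is attracting and every forward orbit converges to some eigendirection $\pm v_j^{(i)}$, converging to $\pm v_1^{(i)}$ unless the starting point lies in the invariant hyperplane $\Span(v_2^{(i)},\dots,v_n^{(i)})$, and recursively inside that hyperplane.

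The geometric engine is Theorem~\ref{thm:sas} together with the description underlying it, that $f_A^*$ pulls back the support function of a divisor by $A$ and then corrects it to be piecewise linear on $\fan$; from this viewpoint the failure of multiplicativity of the pullback is caused precisely by cone closures $\overline{A^n(\tau)}$ that do not map regularly. I would isolate the following sufficient condition for $1$-SS, which I expect to prove directly from that description: if there is a subfan $\Sigma\subseteq\fan'$ whose support is forward invariant ($A(|\Sigma|)\subseteq|\Sigma|$), every cone of which maps regularly under $A$, and a $k_0$ with $A^n(\tau)\in|\Sigma|$ for every ray $\tau\in\fan'(1)$ and every $n\ge k_0$, then $f_A$ is $1$-SS. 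The point is that once all orbits have entered the good region $\Sigma$ the pullbacks compose without correction (and each $f_{A^k}$, $k\ge k_0$, is strongly $1$-stable by Theorem~\ref{thm:sas} applied to $A^k$), while the finitely many transient steps are absorbed into $k_0$.

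With this criterion the task becomes the construction of such a good region for each $A_i$ inside one common fan. Around the attracting direction $v_1^{(i)}$ the transverse contraction rates $|\mu_j^{(i)}/\mu_1^{(i)}|<1$ let me choose a \emph{rational} trapping cone $\sigma$ with $A_i(\sigma)\subseteq\sigma$, which then maps regularly; no irrational data is needed because attraction is an open condition. The saddle directions $v_j^{(i)}$ would be problematic, since there $A_i$ stretches a neighbourhood transversally, but a rational ray can be attracted to $v_j^{(i)}$ only if it lies in $\Span(v_j^{(i)},\dots,v_n^{(i)})$, and when that subspace is irrational it contains no rational ray. In general I would adapt $\fan'$ to the flag of \emph{rational} $A_i$-invariant subspaces (the primary decomposition from the factorization of the characteristic polynomial over $\Q$); inside each such rational subspace the top eigendirection is again attracting for the restricted dynamics and receives its own rational trapping cone. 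Taking $\Sigma_i$ to be the union of these trapping cones over the flag gives a forward-invariant region of regularly-mapping cones that every rational ray enters in uniformly bounded time, the uniform bound coming from compactness of $\P(N_\R)$ away from the finitely many lower-dimensional stable manifolds.

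Finally I would assemble one fan. As there are only finitely many matrices, there are finitely many rational invariant subspaces and attracting directions in total; I refine $\fan$ so that it is simultaneously adapted to all of them and contains every $\Sigma_i$, filling in the complementary transient region to obtain a complete fan. The decisive point making simultaneity work is that the $1$-SS criterion for $A_i$ only constrains cones near $A_i$'s own attracting directions: a cone near $v_1^{(i)}$ lies in the transient region of a different $A_j$ and is visited by $A_j$-orbits only finitely often, so it may be refined as $A_i$ demands without harming the $1$-SS of $A_j$; a shared attractor is handled by a common trapping cone. The same transient principle lets me pass to a regular and then projective refinement by the standard toric resolution and projectivity theorems, refining only in ways that keep each $\Sigma_i$ intact or subdivide it $A_i$-compatibly, so the good regions survive and $1$-SS is preserved for every $A_i$. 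The hard part is exactly this orchestration, arranging one smooth projective refinement that respects the attracting cones of all $m$ matrices at once, together with the uniform entry-time estimate, which is where the absolutely isolated hypothesis, giving genuine attraction rather than neutral behaviour, is essential.
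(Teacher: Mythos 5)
Your proposal follows, in outline, the same strategy as the paper's proof (which is itself a mild extension of Jonsson--Wulcan \cite{JW}): attach to each $A_i$ a family of rational ``trapping'' cones around the dominant eigendirections of its rational invariant subspaces, show that orbits of rays eventually enter this family and that the family is eventually invariant, and deduce $1$-SS. Your subfan $\Sigma_i$ is precisely what \cite{JW} call an \emph{adapted system of cones}, and your entry-time and invariance statements are Lemmas~\ref{JW45} and~\ref{JW46}. One local error along the way: the claim that an irrational subspace ``contains no rational ray'' is false (an irrational plane can contain a rational line); the correct statement, which your flag construction implicitly needs, is that a rational vector $v\in\Span(v_j^{(i)},\dots,v_n^{(i)})$ lies in the smallest \emph{rational} $A_i$-invariant subspace $U_v=\Span_\R\{A_i^k v\}$, which is contained in that span, so one can induct on rational invariant subspaces. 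This is repairable.

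The genuine gap is in your final assembly step. You cannot ``pass to a regular and then projective refinement \dots\ refining only in ways that keep each $\Sigma_i$ intact or subdivide it $A_i$-compatibly'': smoothness forces the trapping cones themselves to be subdivided (a singular trapping cone cannot survive resolution), and once a trapping cone $\sigma$ with $A_i(\sigma)\subseteq\sigma$ is cut into pieces $\sigma_1,\dots,\sigma_r$, the pieces need not map regularly at all -- $A_i(\sigma_j)$ lies in $\sigma$ but may straddle several pieces -- so the good region is destroyed, and ``$A_i$-compatibly'' is not a construction but a restatement of the problem. The same circularity appears in your multi-matrix orchestration: if the attracting direction $v_1^{(j)}$ of $A_j$ lies in the interior of a trapping cone of $A_i$, the two trapping cones cannot both be cones of one fan, so one must be subdivided, recreating the difficulty; your transient-region principle does not apply there, since $A_j$-orbits visit that region infinitely often. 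What closes this gap in the paper is exactly Lemma~\ref{JW411}: \emph{any} refinement, provided it still contains a subfan supported on each rational invariant subspace, contains a \emph{new} adapted system of cones. This is the technical heart (its proof uses crucially that only invariance under high iterates $A_i^k$ is required, i.e., that one is aiming at $1$-SS rather than $1$-stability), and it is what lets the paper first take a common refinement with all the fans generated by the rational adapted systems (Lemma~\ref{JW412}), then impose regularity and projectivity by arbitrary further refinement, and still conclude via Lemmas~\ref{JW45} and~\ref{JW46}. Without proving a statement of this kind, your argument does not go through.
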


The $m=1$ case of the theorem is proved in \cite[Theorem A']{JW}. The proof here basically follows theirs, with a slight
modification. So we briefly recall their proof first.

In \cite{JW}, the authors introduced the notion of {\em adapted systems of cones}. It is a collection of simplicial cones
satisfying certain conditions. We list some properties of adapted systems of cones in the following. For details,
see \cite[Section 4]{JW}.

\begin{lem}\label{JW45}\cite[Lemma 4.5]{JW}
Let $\SS = \{\sigma(V,\eta)\}$ be an adapted system of cones and $v\in N$. Then there exists $k_0\in\N$ such that for
$k\ge k_0$, $A^k(v)\in\sigma(V,\eta)$ for some $\sigma(V,\eta)\in\SS$.
\end{lem}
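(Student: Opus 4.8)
The plan is to determine the asymptotic direction of the forward orbit $\{A^k v\}_k$ from the spectrum of $A$ and then check that this direction is captured by the cones of $\SS$. I would first record a useful consequence of the standing hypothesis that $A$ has absolutely isolated spectrum: since $A$ is a real matrix, any non-real eigenvalue would occur together with its complex conjugate and the two would share the same modulus, contradicting the assumption. Hence all eigenvalues of $A$ are real, pairwise distinct in absolute value, and $A$ is diagonalizable with one-dimensional real eigenspaces. I would therefore write $N_\R = L_1 \oplus \cdots \oplus L_n$, where $L_j$ is the eigenline for the eigenvalue $\mu_j$, ordered so that $|\mu_1| > |\mu_2| > \cdots > |\mu_n| > 0$.

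For the given $v \in N$, which I may assume nonzero, decompose $v = c_1 e_1 + \cdots + c_n e_n$ along unit eigenvectors $e_j \in L_j$, and let $i$ be the least index with $c_i \ne 0$. Then $A^k v = \sum_{j\ge i} c_j \mu_j^k e_j$, and dividing by $|c_i|\,|\mu_i|^k$ gives $A^k v / (|c_i|\,|\mu_i|^k) = \sgn(c_i)(\sgn \mu_i)^k e_i + \varepsilon_k$, where $\norm{\varepsilon_k} = O\big(\max_{j>i}(|\mu_j|/|\mu_i|)^k\big) \to 0$. Thus the ray spanned by $A^k v$ converges to the eigenline $L_i$; more precisely its direction accumulates only on the finite set $\{\pm e_i\}$ (a single point when $\mu_i > 0$, the antipodal pair when $\mu_i < 0$), with the $\varepsilon_k$ an exponentially small transverse perturbation.

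The conclusion then follows from the defining properties of an adapted system (see \cite[Section 4]{JW}). What I need is that the cones $\sigma(V,\eta)$ of $\SS$ associated with the top-level data containing $L_i$ have interiors that cover a conical neighborhood of each eigenray direction $\pm e_i$, so that any vector lying close enough in direction to $L_i$ belongs to one of them. Granting this, I choose $k_0$ so large that for $k \ge k_0$ the perturbation $\varepsilon_k$ is smaller than the transverse opening of these cones; then $\sgn(c_i)(\sgn\mu_i)^k e_i + \varepsilon_k$, and hence $A^k v$, lies in one of the cones $\sigma(V,\eta) \in \SS$. Note that when $\mu_i < 0$ the orbit visits the neighborhoods of $e_i$ and $-e_i$ alternately, so two cones of $\SS$ are used, which is consistent with the statement (a possibly different cone for each $k$).

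The step I expect to be the crux is the last one, matching the limiting directions against the combinatorics of the adapted system: one must verify that the accumulation directions $\pm e_i$ (as $i$ ranges over all possible dominant indices) indeed lie in the open region swept out by the cones of $\SS$, and with enough room to absorb the error uniformly. The cleanest way to organize this is to pass to the unit sphere $S(N_\R)$ and argue by compactness: the limit set of $\{A^k v\}$ projects to the compact (here finite) set $\{\pm e_i\}$, which the adapted system is built to cover by open sets, so a single threshold $k_0$ works for all $k$. The only delicate point is boundary behavior — the limiting ray $L_i$ may lie on shared faces of several cones — and this is exactly where the transversality and openness built into the definition of $\sigma(V,\eta)$ must be used, to guarantee that the slightly perturbed lattice vector $A^k v$ falls into the interior of one of the cones rather than escaping along a boundary stratum.
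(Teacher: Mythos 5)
Your spectral setup is sound: absolutely isolated spectrum does force all eigenvalues of $A$ to be real with pairwise distinct moduli, the direction of $A^k v$ does converge to $\pm e_i$ where $i$ is the dominant index of $v$, and the transverse error is exponentially small. (Note the paper itself gives no proof of this lemma; it quotes it from \cite{JW}, so the evaluation here is of your argument on its own terms.) The genuine gap is exactly at the step you flag as the crux: the claim that the cones of $\SS$ cover an \emph{ambient} conical neighborhood of each eigenray $\pm e_i$. This is false for every $i>1$. In an adapted system, $\sigma(V,\eta)$ is a cone of dimension $\dim V$ \emph{contained in} the invariant subspace $V$ — this is why Lemma \ref{JW411} and the proof of Theorem \ref{thm:JW} insist that the fan contain subfans supported on each invariant subspace. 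So near $e_i$ with $i>1$, the union of the cones of $\SS$ consists of cones lying in proper subspaces, plus the full-dimensional cones adapted to $\pm e_1$; and the latter cannot contain $e_i$ in their interior. Indeed, if $e_i\in\Int\sigma(N_\R,e_1)$, then $v=-\epsilon e_1+e_i$ lies in that cone for small $\epsilon>0$, while the direction of $A^k v$ tends to $-e_1$; the invariance $A\,\sigma(N_\R,\pm e_1)\subseteq\sigma(N_\R,\pm e_1)$ together with closedness would then force the cone to contain both $e_1$ and $-e_1$, contradicting strict convexity. Hence a lattice vector whose direction is close to $e_i$ but which has a tiny nonzero $e_1$-component lies in \emph{no} cone of $\SS$, and your compactness-on-the-sphere argument cannot close.

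The repair is to relativize the whole argument. The index set $J(v)=\{j : c_j\ne 0\}$ is constant along the orbit, so $A^k v$ stays in the invariant subspace $V_{J(v)}=\bigoplus_{j\in J(v)}L_j$ for all $k$; in particular your perturbation $\varepsilon_k$ also lies in $V_{J(v)}$, not in an arbitrary transverse direction. The adapted system contains cones $\sigma(V_{J(v)},\pm e_i)$ associated to that subspace, each full-dimensional \emph{in} $V_{J(v)}$ and containing the dominant unit vector in its relative interior. Running your estimate inside $V_{J(v)}$ then produces $k_0$ with $A^k v\in\sigma(V_{J(v)},e_i)\cup\sigma(V_{J(v)},-e_i)$ for all $k\ge k_0$, which is the lemma. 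This relativization is not cosmetic: it is the entire reason adapted systems are indexed by invariant subspaces, and without it the statement you need about covering neighborhoods is simply untrue.
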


\begin{lem}\label{JW46}\cite[Lemma 4.6]{JW}
Let $\SS = \{\sigma(V,\eta)\}$ be an adapted system of cones and $v\in N$. Then there exists $k_0\in\N$ such that for
$k\ge k_0$, $\SS$ is invariant under $A^k$.
\end{lem}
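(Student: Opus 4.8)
The plan is to reduce the invariance of the whole system $\SS$ to the behaviour of the finitely many ray generators of its cones, and then to feed each generator into Lemma~\ref{JW45}. By definition, ``$\SS$ is invariant under $A^k$'' asks that every cone $\sigma(V,\eta)\in\SS$ map regularly into $\SS$, i.e. that $A^k(\sigma(V,\eta))$ be contained in a single cone of $\SS$. Since each cone of an adapted system is simplicial, it is the nonnegative span $\sum_i \R_+ v_i$ of its finitely many ray generators $v_i$, and since $A^k$ is linear we have $A^k(\sigma(V,\eta)) = \sum_i \R_+\, A^k(v_i)$. By convexity it therefore suffices to arrange that, for $k$ large, the images $A^k(v_1),\dots,A^k(v_d)$ of the generators of a given cone all lie in one common cone of $\SS$; that common cone then contains $A^k(\sigma(V,\eta))$.

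First I would apply Lemma~\ref{JW45} to each of the finitely many ray generators $v$ occurring among all cones of $\SS$. This yields, for every such $v$, an index $k_v\in\N$ so that $A^k(v)$ lies in some cone of $\SS$ whenever $k\ge k_v$; taking the maximum of the $k_v$ over this finite set gives a single threshold beyond which every generator of every cone has landed somewhere in $\SS$. The remaining—and genuinely delicate—point is \emph{coherence}: that for each fixed cone $\sigma(V,\eta)$ the images of its several generators fall into the \emph{same} cone of $\SS$ rather than being scattered, which is what convexity requires.

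The main obstacle is exactly this coherence, and it is where the defining properties of an adapted system enter, not Lemma~\ref{JW45} alone. Here I would invoke the absolutely isolated spectrum hypothesis: on $N_\R$ the matrix $A\otimes\R$ carries the flag of generalized eigenspaces ordered by the distinct moduli $|\mu_1|>\cdots>|\mu_n|$, and the projectivized action of $A^k$ contracts generic directions toward dominant eigendirections, so that the normalized images $A^k(v_i)/\norm{A^k(v_i)}$ converge to a common attracting direction determined by the eigenspace datum $V$ attached to $\sigma(V,\eta)$. The adapted cones are built as flag-compatible neighbourhoods of these attracting directions, and the sign datum $\eta$ records which of the finitely many such neighbourhoods; thus the asymptotically aligned images $A^k(v_i)$ are eventually swept into a single adapted cone, and the induced assignment $\sigma(V,\eta)\mapsto\sigma(V',\eta')$ stabilizes for $k$ large.

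Finally, I would set $k_0$ to be the maximum of all the thresholds produced above, over the finitely many cones and generators of $\SS$. For $k\ge k_0$ every generator of every cone has both landed in $\SS$ and become aligned, so the generators of each $\sigma(V,\eta)$ share a common target cone; by linearity and convexity $A^k(\sigma(V,\eta))$ lies in that target cone, whence $\SS$ maps regularly into itself and is invariant under $A^k$, as claimed.
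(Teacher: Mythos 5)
The paper does not actually prove this lemma: it is quoted verbatim from \cite[Lemma 4.6]{JW}, with the definition of adapted systems deliberately omitted, so your attempt has to be measured against the argument in \cite{JW}, which leans entirely on the precise defining conditions of those cones. Your opening reduction is sound: since $A^k$ is linear and each cone of $\SS$ is simplicial, $A^k(\sigma)=\sum_i \R_+A^k(v_i)$, so it suffices that all generators of a fixed cone land in one common cone of $\SS$. The genuine gap is the coherence step. Your key claim --- that the normalized images $A^k(v_i)/\norm{A^k(v_i)}$ of all generators of a fixed cone converge to a \emph{common} attracting direction determined by $V$ --- is false. Take $A=\mathrm{diag}(4,2)$ and the cone spanned by $v_1=e_2$ and $v_2=e_1+e_2$: the direction of $A^kv_1$ is constantly $e_2$, while the direction of $A^kv_2$ tends to $e_1$. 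In general the limit direction of $A^kv$ is the dominant eigendirection of the \emph{smallest} invariant subspace containing $v$, and in an adapted system the faces of $\sigma(V,\eta)$ lying in invariant subspaces are themselves cones of the system (this is exactly the ``flag-compatibility'' you mention), so the generators of a single cone deliberately lie in different members of the invariant flag and converge to \emph{different} eigendirections. The images are therefore not asymptotically aligned; instead $A^k(\sigma)$ opens up toward the cone spanned by the several dominant directions of the flag.

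What rescues the lemma in \cite{JW} is precisely the part you bracket off with ``the adapted cones are built as flag-compatible neighbourhoods of these attracting directions'': the defining conditions of an adapted system are engineered so that the cone spanned by those distinct limit directions, with the correct signs, sits inside a single cone of $\SS$, with each limit direction in the relative interior of the appropriate face; only then does convergence of each generator produce a threshold $k_0$. By invoking this property without stating or verifying it, your argument assumes at the decisive moment essentially what is to be proved, so it is circular where it is not incorrect. A minor further inaccuracy: when dominant eigenvalues are negative, the target cone alternates between $\sigma(V,\eta)$ and $\sigma(V,-\eta)$ rather than ``stabilizing''; invariance of the system under each fixed $A^k$, $k\ge k_0$, still holds because both cones belong to $\SS$, but the assignment $\sigma(V,\eta)\mapsto\sigma(V',\eta')$ you describe need not stabilize in $k$.
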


\begin{lem}\label{JW411}\cite[Lemma 4.11]{JW}
Let $\fan$ ba a fan in $N$ that contains an adapted system of cones, and let $\fan'$ be a refinement of $\fan$.
Assume that for every invariant rational subspace $V$ of $N_\R$, there is a subfan of $\fan'$ with support $V$.
Then $\fan'$ contains a unique adapted system of cones.
\end{lem}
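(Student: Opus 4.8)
The plan is to argue by induction on the dimension of the invariant rational subspaces of $N_\R$, exploiting the fact that an adapted system $\SS = \{\sigma(V,\eta)\}$ is organized along the flag of such subspaces: each cone $\sigma(V,\eta)$ is associated to the invariant subspace $V$, sits inside $V$ with dimension $\dim V$, and those of its proper faces that lie in smaller invariant subspaces are again cones of $\SS$. Since $\fan'$ refines $\fan$, each cone $\sigma(V,\eta)$ of the given adapted system is subdivided in $\fan'$. The hypothesis that every invariant rational subspace $V$ is the support of a subfan $\fan'_V$ of $\fan'$ is exactly what guarantees that this subdivision does not cut across $V$: the cones of $\fan'$ lying inside $V$ assemble into a genuine fan $\fan'_V$ that refines the restriction of $\SS$ to $V$.

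For existence, I would construct the adapted system $\SS'$ in $\fan'$ stratum by stratum. Begin with the minimal invariant rational subspaces $V$: here there are no smaller invariant subspaces to impose boundary conditions, so the adapted cones are read off directly from the subfan $\fan'_V$. Assuming the adapted cones have been produced for all invariant subspaces of dimension $< d$, take $V$ of dimension $d$; the subfan $\fan'_V$ refines the restriction of $\SS$ to $V$, and I would select, among the $d$-dimensional cones of $\fan'_V$, those whose boundary faces match the already-constructed adapted cones on the smaller invariant subspaces contained in $V$. The template furnished by $\sigma(V,\eta)\in\SS$, together with the refinement structure of $\fan'_V$, should force such a simplicial collection to exist and to cover the support of $\SS$ inside $V$. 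One then verifies that the resulting family $\SS'$ satisfies the defining conditions of an adapted system; the dynamical invariance embodied in Lemmas~\ref{JW45} and~\ref{JW46} is inherited from $\SS$, since $A$ preserves every invariant rational subspace and $\fan'$ merely refines $\fan$.

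For uniqueness, I would show that any adapted system contained in $\fan'$ coincides with $\SS'$, again by induction on $\dim V$. The key point is the rigidity of the adapted cones in a fixed invariant subspace $V$: they are determined as the maximal simplicial cones of $\fan'_V$ compatible with the adapted cones already fixed on the lower strata, and the subfan hypothesis removes any ambiguity about which cones of $\fan'$ lie in $V$. Thus the adapted cones at each stratum are uniquely pinned down, and hence so is the whole system.

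The main obstacle I anticipate is the inductive construction step: checking that the $d$-dimensional cones extracted from $\fan'_V$ genuinely reassemble into simplicial cones whose faces are precisely the previously-constructed lower-dimensional adapted cones, and that they cover the correct region of $V$. The subfan hypothesis is the essential input — without it a refinement could break an invariant subspace into cones that fail to align with the flag, destroying the compatibility between strata — but converting ``$V$ is the support of a subfan'' into the exact face-matching demanded by the definition of an adapted system is where the careful combinatorial bookkeeping, and the real content of the lemma, lies.
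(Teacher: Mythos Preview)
The paper does not prove this lemma: it is quoted verbatim from \cite[Lemma~4.11]{JW} and used as a black box in the proof of Theorem~\ref{thm:JW}. There is therefore no ``paper's own proof'' to compare your proposal against; any genuine comparison would have to be made with the argument in \cite{JW}.

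As for your proposal itself, it is a plan rather than a proof, and you say as much in your final paragraph. The inductive scheme along the flag of invariant rational subspaces is a natural organizing principle and is indeed how adapted systems are built in \cite{JW}. But the substantive content of the lemma is precisely the step you flag as the ``main obstacle'': showing that inside each $d$-dimensional invariant subspace $V$, the refinement $\fan'_V$ still contains $d$-dimensional simplicial cones whose faces on the lower invariant strata are the previously constructed adapted cones, and that these cones satisfy the remaining axioms of an adapted system (in particular the positioning relative to the eigenspace decomposition encoded by the parameter $\eta$). Your sketch asserts that ``the template furnished by $\sigma(V,\eta)\in\SS$, together with the refinement structure of $\fan'_V$, should force such a simplicial collection to exist,'' but this is exactly what must be argued, not assumed. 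Likewise, invoking Lemmas~\ref{JW45} and~\ref{JW46} for the ``dynamical invariance'' is circular here: those lemmas take an adapted system as input, so they cannot be used to verify that your candidate $\SS'$ is one. Until the face-matching and the axiom check are actually carried out, the proposal remains an outline of where the work lies rather than a proof.
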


\begin{lem}\label{JW412}\cite[Lemma 4.12]{JW}
There exists a rational adapted system of cones for every $A\in M_n(\Z)$ with absolutely isolated spectrum.
\end{lem}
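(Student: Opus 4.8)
The plan is to build the system explicitly from the spectral data of $A$, exploiting the strong rigidity imposed by absolutely isolated spectrum. First I would observe that the hypothesis is very restrictive: since complex conjugate eigenvalues share the same modulus, the condition $|\mu_i|\ne|\mu_j|$ for $i\ne j$ forces every eigenvalue to be real, and distinct moduli give distinct eigenvalues, so $A$ is diagonalizable over $\R$. Thus after ordering $|\mu_1|<\cdots<|\mu_n|$ we obtain real eigenlines $\R e_1,\dots,\R e_n$, and the only $A$-invariant rational subspaces $V$ are the Galois-stable direct sums $\bigoplus_{i\in S}\R e_i$; these are exactly the rational subspaces $V$ that will index the cones $\sigma(V,\eta)$.

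The geometric heart of the construction is that the dynamics of $A$ attracts toward top-modulus eigendirections. For any $v=\sum c_i e_i$, writing $m$ for the largest index with $c_m\ne0$, the normalized iterates $A^kv/\norm{A^kv}$ approach the top eigenline $\R e_m$ at a definite exponential rate controlled by the ratio of the top two moduli present, accumulating at $+e_m$ or $-e_m$ according to the sign of $\mu_m^k$. For each invariant subspace $V$ with top eigenline $\R e_{m(V)}$ and each admissible sign datum $\eta$ (recording the side $\pm e_{m(V)}$ toward which orbits accumulate, together with the compatible lower-order data), I would take $\sigma(V,\eta)$ to be a simplicial \emph{rational} cone inside $V$ whose interior contains the relevant attracting ray and which is thin enough, relative to the spectral gap, that $A(\sigma(V,\eta))$ is again contained in the appropriate union of cones and faces of the system. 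The strict separation of moduli is precisely what provides the room to thicken the irrational eigendirection $e_{m(V)}$ into a rational cone that remains forward-invariant.

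I would carry this out by induction on $\dim V$, so that the cones attached to proper invariant subspaces $V'\subsetneq V$ (which sit inside the boundary $\partial\sigma$ of the larger cones) are already in place and can serve as boundary data for the cones in $V$; the datum $\eta$ is what bookkeeps this nesting. By construction the resulting finite collection $\{\sigma(V,\eta)\}$ consists of rational simplicial cones, is stable under a suitable power of $A$, and absorbs every lattice vector under iteration, since the accumulation direction $\pm e_{m(V_v)}$ of the orbit of $v$ lies in the interior of some $\sigma(V_v,\eta)$ for $V_v$ the smallest invariant subspace containing $v$. These are exactly the properties isolated in Lemmas~\ref{JW45} and~\ref{JW46}.

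The main obstacle is reconciling rationality with the irrationality of the eigenvectors while keeping the whole system internally consistent. One cannot place cone faces along the eigenlines themselves, so the cones must be genuinely fattened, and one must then check that after fattening (a) $A$ still maps each cone into the system, and (b) the faces of the cones attached to $V$ agree with the cones attached to the invariant subspaces $V'\subset V$, so that the collection assembles into a single coherent system rather than an ad hoc list. Both points rely quantitatively on the strict inequalities $|\mu_i|\ne|\mu_j|$: the spectral gaps furnish a uniform contraction that makes forward-invariance robust under rational perturbation, and a simultaneous gap-respecting choice of cones across the entire flag of invariant subspaces yields the required compatibility. Verifying these estimates against the precise definition of an adapted system from~\cite{JW} is the delicate, and decisive, step.
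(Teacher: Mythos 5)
Note first that the paper itself contains no proof of this statement: it is quoted verbatim from \cite[Lemma 4.12]{JW}, with the reader referred to \cite[Section 4]{JW} for details, so your attempt can only be measured against Jonsson--Wulcan's original construction. Measured that way, your plan is essentially their argument: the (correct) observation that absolutely isolated spectrum forces all eigenvalues to be real, simple, and of pairwise distinct absolute values, hence $A$ diagonalizable over $\R$; the identification of the rational $A$-invariant subspaces with the Galois-stable sums of eigenlines; and the construction of the cones $\sigma(V,\eta)$ as rational simplicial cones fattened around the dominant eigendirections $\pm e_{m(V)}$ of each such $V$, with the spectral gaps providing exactly the room needed to replace the (generally irrational) eigenvectors by nearby rational generators while retaining forward invariance under high powers of $A$ and the absorption property of Lemma~\ref{JW45}.

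Two caveats. Your parenthetical claim that the cones attached to $V'\subsetneq V$ ``sit inside the boundary $\partial\sigma$'' of the cones attached to $V$ is imprecise: this nesting occurs only along chains where the dominant eigendirection persists, i.e.\ when $e_{m(V)}\in V'$, in which case one arranges $\sigma(V,\eta)\cap V'=\sigma(V',\eta')$; when $e_{m(V)}\notin V'$, the cones of $V'$ surround a different attracting direction and a sufficiently thin $\sigma(V,\eta)$ meets $V'$ only at the origin, so they are not boundary data for the larger cones. Second, your closing admission that the estimates have not been checked against the precise axioms of an adapted system is a real limitation: that quantitative verification is the actual content of \cite[Lemma 4.12]{JW}, so what you have is a faithful blueprint of the known proof rather than a complete standalone argument.
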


\begin{proof}[Proof of Theorem~\ref{thm:JW}]
First, we refine the fan $\fan$ so that for each rational invariant subspace $V$ of each $A_i$, there is a
subfan of $\fan$ whose support is $V$.

Next, we refine $\fan$ so that for each $A_i$, it contains an adapted system of cones. We achieve this goal by
doing the following. Let $\SS_i$ be a rational adapted system of cones for $A_i$; its existence is guaranteed by
Lemma~\ref{JW412}. Let $\fan_i$ be a complete fan generated by cones in $\SS_i$. Now take a common refinement
of $\fan$ and all $\fan_i$, then take a further refinement to make the fan both
regular and projective. Call this refined fan $\fan'$. By Lemma~\ref{JW411},
$\fan'$ contains a unique adapted system of cones $\SS'_i$ for each $A_i$.

Finally, by Lemma~\ref{JW45}, there is a number $k_0$ such that for all $k\ge k_0$ and $\gamma\in\fan'(1)$,
we have $A_i^k(\gamma)\subset \sigma$ for some $\sigma \in\SS'_i$. Furthermore, by Lemma~\ref{JW46}, we can
choose an even larger $k_0$ such that for all $k\ge k_0$, each $\SS'_i$ is invariant under $A_i^k$.
These conditions will imply that each $A_i$ is $1$-SS, which concludes our proof.
\end{proof}

\begin{cor}
\label{cor:stables}
Let $\fan$ be a fan in $N\cong \Z^n$, and $A\in M_n(\Z)$ be a matrix with absolutely isolated spectrum.
Then there exists a complete refinement $\fan'$ of $\fan$ such that $X(\fan')$ is smooth and
projective and the induced maps $f_A: X(\fan') \dashrightarrow X(\fan')$ is both $1$-SS and
$(n-1)$-SS.
\end{cor}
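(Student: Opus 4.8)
The plan is to derive the $(n-1)$-SS property from the $1$-SS property by means of the duality $A\mapsto A'$ already exploited in Corollary~\ref{cor:dual_stable}, and then to produce a single smooth projective refinement handling both $A$ and $A'$ at once via Theorem~\ref{thm:JW}. Concretely, I would run Theorem~\ref{thm:JW} with the two input matrices $A_1=A$ and $A_2=A'$; this furnishes a complete refinement $\fan'$ with $X(\fan')$ smooth and projective on which both $f_A$ and $f_{A'}$ are $1$-SS. It then remains only to convert the $1$-SS property of $f_{A'}$ into the $(n-1)$-SS property of $f_A$.

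The first thing to check is that $A'$ is an admissible input for Theorem~\ref{thm:JW}, i.e.\ that it again has absolutely isolated spectrum. If $\mu_1,\dots,\mu_n$ are the eigenvalues of $A$, then the eigenvalues of $A'=|\det(A)|\,A^{-1}$ are $|\det(A)|/\mu_i$, whose moduli are $|\det(A)|/|\mu_i|=\prod_{j\ne i}|\mu_j|$. Since $\det(A)\ne0$ forces every $|\mu_i|$ to be nonzero and the $|\mu_i|$ are pairwise distinct by hypothesis, the numbers $|\det(A)|/|\mu_i|$ are pairwise distinct as well. Hence $A'$ has absolutely isolated spectrum and Theorem~\ref{thm:JW} applies to the pair $\{A,A'\}$.

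The heart of the argument is a ``stably stable'' analogue of Corollary~\ref{cor:dual_stable}: on a fixed smooth projective simplicial $X(\fan')$, the map $f_A$ is $p$-SS if and only if $f_{A'}$ is $(n-p)$-SS. I would prove this by the same adjunction used in the Proposition. Starting from $\pair{f_A^*\alpha}{\beta}=|\det(A)|^{p-n+1}\pair{\alpha}{f_{A'}^*\beta}$ and applying it to the genuine iterate $f_A^k$, which equals $f_{A^k}$, together with the identity $(A^k)'=(A')^k$, one computes for $\alpha\in H^{p,p}$ and $\beta\in H^{n-p,n-p}$ that both $\pair{(f_A^{k+l})^*\alpha}{\beta}$ and $\pair{((f_A^k)^*\circ(f_A^l)^*)\alpha}{\beta}$ carry the same scalar factor $|\det(A)|^{(k+l)(p-n+1)}$ and are governed, on the $A'$-side, by $(f_{A'}^{k+l})^*\beta$ and $(f_{A'}^l)^*(f_{A'}^k)^*\beta$ respectively. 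Since the intersection pairing between $H^{p,p}$ and $H^{n-p,n-p}$ is perfect on a smooth projective variety, the $p$-SS identity for $f_A$ is equivalent to $(f_{A'}^{k+l})^*=(f_{A'}^l)^*\circ(f_{A'}^k)^*$ for all $k,l\ge k_0$; relabeling $k\leftrightarrow l$, which is harmless because the SS condition quantifies symmetrically over $k,l\ge k_0$, this is exactly the $(n-p)$-SS condition for $f_{A'}$.

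I expect the main obstacle to be precisely this last equivalence: the composition $(f_A^k)^*\circ(f_A^l)^*$ is not the pullback of a single iterate, so one must apply the adjunction formula twice and carefully track the order in which the two operators on $H^{n-p,n-p}$ appear—it comes out reversed, and one relies on the symmetry of the SS quantifier to absorb this. Granting the equivalence for $p=n-1$, the $1$-SS property of $f_{A'}$ on $X(\fan')$ yields the $(n-1)$-SS property of $f_A$; combined with the $1$-SS property of $f_A$ already supplied by Theorem~\ref{thm:JW}, this shows that $f_A$ is both $1$-SS and $(n-1)$-SS on the smooth projective $X(\fan')$, as required.
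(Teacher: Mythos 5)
Your proof is correct and takes essentially the same route as the paper: the paper's own (one-line) proof likewise feeds the pair $\{A,A'\}$ into Theorem~\ref{thm:JW} to get a single smooth projective refinement on which both $f_A$ and $f_{A'}$ are $1$-SS, and then invokes the duality of Corollary~\ref{cor:dual_stable} to convert the $1$-SS property of $f_{A'}$ into the $(n-1)$-SS property of $f_A$. The details you supply---that $A'$ again has absolutely isolated spectrum, and the SS-analogue of the duality with its order-reversed composition absorbed by the symmetric quantifier over $k,l\ge k_0$---are precisely the points the paper leaves implicit.
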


\begin{proof}
By Corollary~\ref{cor:dual_stable}, applying $A$ and $A'$ to Theorem~\ref{thm:JW}, the corollary then follows.
\end{proof}

\subsection{Stabilization in a special case}

The following is a case we will encounter when we prove Theorem~\ref{thm:main_1}, we prove a version of it
that is valid in all dimensions.

\begin{prop}
\label{prop:two_real_pos_eval}
Let $\fan$ be a fan in $N\cong \Z^n$, and $A\in M_n(\Z)$ be a diagonalizable matrix
whose eigenvalues are two distinct real numbers
$\mu_1, \mu_2 \in \R$ (each with some multiplicities) such that $|\mu_1|\ne |\mu_2|$.
Then there exists a complete refinement $\fan'$ of $\fan$ such that $X(\fan')$ is simplicial and
projective and the induced maps $f_A: X(\fan') \dashrightarrow X(\fan')$ is both $1$-stable and
$(n-1)$-stable.
\end{prop}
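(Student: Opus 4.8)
The plan is to verify the geometric criterion of Theorem~\ref{thm:sas} on a cleverly chosen fan, and to obtain $(n-1)$-stability for free from $1$-stability by duality. First I would invoke Corollary~\ref{cor:dual_stable}: since $f_A$ is $(n-1)$-stable exactly when $f_{A'}$ is $1$-stable, and $A'=|\det(A)|A^{-1}$ is again diagonalizable with the \emph{same} pair of eigenspaces but eigenvalues $|\det(A)|/\mu_1,\,|\det(A)|/\mu_2$, it suffices to produce one complete simplicial projective refinement $\fan'$ of $\fan$ on which \emph{both} $f_A$ and $f_{A'}$ are strongly $1$-stable (on a projective variety strong $1$-stability and $1$-stability coincide). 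Write $N_\R=E_1\oplus E_2$ for the eigenspace decomposition, so that $A$ acts as the scalar $\mu_i$ on $E_i$; relabel so that $|\mu_1|<|\mu_2|$. Then the forward $A$-orbit of any ray off $E_1$ converges projectively to $E_2$ and the backward orbit to $E_1$, while rays inside $E_1$ stay there; for $A'$ the roles of $E_1$ and $E_2$ are exchanged. Thus each eigenspace is the attractor for exactly one of $A,A'$, and the two must be treated symmetrically.

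The heart of the argument is the construction of $\fan'$. Because $A$ is a scalar on each $E_i$, the projectivized dynamics contracts toward the attracting eigenspace: a cone whose relative interior meets $E_2$ and is radially thick around it is carried into itself by $A$ up to the sign $\sgn(\mu_2)$, and symmetrically for $E_1$ under $A'$. I would therefore first refine $\fan$ to a complete, \emph{centrally symmetric} (so that $-\sigma$ is a cone whenever $\sigma$ is, which absorbs negative eigenvalues), simplicial, projective fan in which cones adjacent to $E_2$ are forward-invariant under $A$ and cones adjacent to $E_1$ are forward-invariant under $A'$; in the centrally symmetric $E_i$-subfans the scalar action $\mu_i\,\id$ then permutes cones regularly. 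The remaining \emph{transient} cones — those an orbit crosses before entering an attracting region — are made to map regularly by repeatedly \emph{pulling back cone walls}: one subdivides each offending cone $\sigma$ by the preimages under $A^{\pm1}$ of the walls that $A(\sigma)$ (resp.\ $A'(\sigma)$) would otherwise straddle. The decisive feature is that the rays so introduced are themselves carried by $A^{\pm1}$ onto rays already present, so the process closes up after finitely many steps, all while keeping the fan simplicial, projective and centrally symmetric.

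With $\fan'$ in hand I would verify the criterion of Theorem~\ref{thm:sas} for $A$, the argument for $A'$ being identical by symmetry. A ray $\tau\in\fan'(1)$ lying in $E_1$ maps regularly inside the $E_1$-subfan; any other ray has its forward orbit enter the attracting region around $E_2$ after finitely many steps, the number controlled by the contraction ratio $|\mu_1/\mu_2|<1$. From that point on the successive cone closures $\overline{A^n(\tau)}$ lie among the finitely many cones adjacent to $E_2$, which map into one another regularly, and the finitely many transient closures were arranged to map regularly by the wall-pullback step. Hence $\overline{A^n(\tau)}$ maps regularly to $\fan'$ for every $n$, so $f_A$ is strongly $1$-stable; projectivity upgrades this to $1$-stability, and the same conclusion for $A'$ combined with Corollary~\ref{cor:dual_stable} gives $(n-1)$-stability.

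The main obstacle is twofold. First, one must prove that the wall-pullback refinement actually \emph{terminates}; this is precisely where the hypothesis $|\mu_1|\neq|\mu_2|$ is indispensable, since the strict contraction toward the attractor confines each orbit to the transient region for uniformly bounded time and renders the relevant combinatorics finite (without it no finite stable fan need exist). Second, and more delicate, is the case where $\mu_1,\mu_2$ are conjugate quadratic irrationals, which can occur once their common multiplicity is $\ge 2$ (so $n\ge 4$): then $E_1\cap N_\Q=E_2\cap N_\Q=\{0\}$, so neither eigenspace contains a rational ray and one cannot place cones \emph{inside} an eigenspace. Here I would instead work with rational cones that contain the eigendirections in their interiors and show that the contraction estimate still forces forward invariance of the attracting cones. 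Carrying this out while simultaneously refining the prescribed fan $\fan$ and tracking the signs $\sgn(\mu_i)$ is the technical core of the proof.
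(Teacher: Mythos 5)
Your overall frame (reduce $(n-1)$-stability of $f_A$ to $1$-stability of $f_{A'}$ via Corollary~\ref{cor:dual_stable}, then build one fan on which both are strongly $1$-stable) matches the paper, but the construction at the heart of your argument is not carried out, and the two places where you defer the work are precisely the places where the proof could fail. First, the termination of your ``wall-pullback'' refinement is asserted, not proved: when you subdivide a cone along $A^{\pm 1}$-preimages of walls, the new walls and rays generically are \emph{not} carried onto existing ones, so each subdivision can create new incidences that force further subdivisions, and there is no a priori bound. This cannot be waved through by invoking $|\mu_1|\ne|\mu_2|$ alone --- Section~\ref{sec:complicated_cases} of the paper exhibits matrices for which no refinement of any fan is ever $1$-stable, so any termination argument must do real work with the eigenvalue structure. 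Second, your treatment of irrational eigenspaces (which, incidentally, already occurs for multiplicity one and $n=2$, e.g.\ eigenvalues $1\pm\sqrt{2}$, not only for $n\ge 4$ as you state) is announced as ``the technical core'' but never supplied. In effect, you are trying to rebuild the adapted-systems-of-cones machinery of Theorem~\ref{thm:JW}, which is proved in the literature only under the hypothesis of absolutely isolated spectrum --- exactly the hypothesis that fails here because the two eigenvalues have multiplicities.

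The idea that closes both gaps, and which your proposal is missing, is a reduction to dimension two: for a ray generator $v$, write $v=v_1+v_2$ with $v_i\in E_i$, and observe that the plane $\Gamma:=\Span\{v_1,v_2\}$ is $A$-invariant and \emph{rational} --- not because $v_1,v_2$ are rational (they need not be), but because $\Gamma$ contains the two independent lattice points $v$ and $Av$. The entire orbit $\{A^k v\}_{k\in\Z}$ lies in $\Gamma$, so one only has to stabilize the finitely many two-dimensional fans $\Gamma_\gamma\cap\fan$, one for each ray $\gamma$ of $\fan$ (refining each so that every ray of it maps to a ray or into an $A$-invariant two-dimensional cone, simultaneously for $A$ and $A'$), and then take a common simplicial refinement of $\fan$ with these plane fans without adding rays. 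Finiteness is then automatic (finitely many rays, finitely many planes, and each planar stabilization is the classical two-dimensional one), and the irrationality of the eigendirections is harmless because everything happens inside rational planes. Without this confinement-to-invariant-planes observation, or some substitute argument proving your termination and covering claims, your proposal remains an outline of a strategy rather than a proof.
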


\begin{proof}
For each ray $\gamma\in\fan(1)$, consider its ray generator $v$.
Write $v = v_1 + v_2$, where $v_i$ is an eigenvector of $\mu_i$, $i=1,2$.
If one of the $v_i$ is zero, then $\gamma$ is in fact invariant under $A$.
Now suppose both $v_1$ and $v_2$ are nonzero. Let $\Gamma_\gamma := \Span\{v_1,v_2\}$
be the $2$-dimensional vector space spanned by $v_1,v_2$, then $\Gamma_\gamma$ is invariant
under $A$, i.e., $A(\Gamma_\gamma)=A^{-1}(\Gamma_\gamma)=\Gamma_\gamma$,
and $A^k(\gamma)\subset\Gamma_\gamma$ for all $k\in\Z$.

Notice that $\Gamma_\gamma\cap N$ has rank $2$, since both $v, A(v)\in \Gamma_\gamma\cap N$, and they are independent.
Thus, $\Gamma_\gamma\cap\fan := \{ \Gamma_\gamma\cap\sigma | \sigma\in\fan \}$ is a fan of rational polyhedral cones with
support $\Gamma_\gamma$. We can refine $\Gamma_\gamma\cap\fan$ to obtain a refinement $\fan_{\gamma}$
such that under the map $A|_{\Gamma_\gamma}$, each ray in $\fan_\gamma$ either maps to another ray, or maps into
a two dimensional cone $\sigma$ of $\fan_\gamma$ with $A(\sigma)\subset\sigma$. Moreover, notice that
we can in fact find a refinement $\fan_\gamma$ such that
the same requirement holds for $A'$ too.

Now take a common simplicial refinement of $\fan$ and $\fan_\gamma$, $\gamma\in\fan(1)$ without adding rays,
we obtain a new fan $\fan'$.
We have $\fan'(1) = \cup_{\gamma\in\fan(1)} \fan_\gamma (1) $, thus under the map $A$,
each ray in $\fan'$ is either invariant, or eventually maps to some two dimensional cone which is invariant.
Thus $f_A$ is $1$-stable. Similarly, we know $f_{A'}$ is $1$-stable, thus $f_A$ is $(n-1)$-stable too.
\end{proof}

More generally, now we assume that $A$ is diagonalizable, and
there exist $r_1>r_2>0$ such that for each eigenvalue $\mu$ of $A$, $\mu/\bar{\mu}$
is a root of unity, and either $|\mu|=r_1$ or $|\mu|=r_2$. Then there exists some
$\ell$ such that $A^\ell$ satisfies the assumption of Proposition~\ref{prop:two_real_pos_eval}.
Observe that for any ray $\gamma$ with generator $v$, let $\Gamma_\gamma$ be the plane spanned by $v$
and $A^\ell(v)$, then the orbit $\{ A^k(v) | k\in\Z\}$ is contained in finitely many planes
$A^k(\Gamma_\gamma)$, $k=0,\cdots, \ell-1$. We can then simultaneously refine
the fans $A^k(\Gamma_\gamma)\cap \fan$, such that under $A$, each ray in the subdivided fan
either maps into another ray, or maps into a two dimensional cone $\sigma_0$ and
there exist two dimensional cones $\sigma_1,\cdots,\sigma_\ell=\sigma_0$ in the refined cones such that
$A(\sigma_k)\subset \sigma_{k+1}$ for $k=0,\cdots,\ell-1$. This implies $A^\ell(\sigma_k)\subset\sigma_k$.
We can also achieve the same requirement for $A'$ simultaneously, as in the proof of Proposition~\ref{prop:two_real_pos_eval}.
Now take a common refinement of $\fan$ with all these refined fans, without adding more rays, we
obtain a new fan $\fan'$ such that $f_A$ is both $1$-stable and $(n-1)$-stable on $X(\fan')$.

\section{Resolution of Indeterminacy of Pairs for Toric Varieties}
\label{sec:res_indet}

Let $X$ be a projective variety and $G$ be a finite subgroup of the group of birational transformations
$\Bir(X)$. A resolution of indeterminacy
of the pair $(X,G)$ consists of a smooth variety $X'$, birationally
equivalent to $X$, and a birational map $\pi : X'\to X$ such that for every
$g\in G$ the composite map $\pi^{-1} g \pi$ is an automorphism of $X$.

In a paper of de Fernex and Ein \cite{Ein},
the authors show that in characteristic zero the resolution of indeterminacy of a pair $(X,G)$
always exists. They also obtain explicit construction of the resolution in some two dimensional cases.
Also, in Chel{\cprime}tsov's paper \cite{Che}, an explicit construction
of resolution of indeterminacy of pairs is given in dimension three using the minimal model program.

In this section, we first mention a known proposition which implies the resolution of indeterminacy of pairs
in the case of toric varieties and equivariant birational maps. Then we will modify the condition slightly,
obtaining a theorem which can be applied to a case in our classification.

First, let us recall the following.

\begin{prop}[\cite{Br,THS}]
Let $N$ be a lattice and $\fan\cong \Z^n$ be a fan of $N_\R$. Let $G$ be a finite group of automorphisms of $N$.
Then there exists a refinement $\fan'$ of $\fan$ which is smooth, projective, and invariant by $G$.
\end{prop}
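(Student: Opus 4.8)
The goal is to produce, from any fan $\fan$ in $N_\R\cong\R^n$ and any finite group $G\le\mathrm{Aut}(N)$, a refinement $\fan'$ that is simultaneously $G$-invariant, simplicial/regular (smooth), and projective. Since $G$ acts on $N$ by lattice automorphisms, it permutes the cones of $N_\R$ and carries the structure we need to keep track of. The plan is to build the refinement in three stages, each time preserving $G$-invariance.

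\begin{proof}[Proof sketch]
First I would produce a $G$-invariant refinement of $\fan$ at the level of fans, ignoring smoothness and projectivity. For each cone $\sigma\in\fan$ and each $g\in G$, the image $g(\sigma)$ is again a rational polyhedral cone; the key point is to take the coarsest common refinement of the finite collection $\{g(\sigma)\mid g\in G,\ \sigma\in\fan\}$, i.e. to intersect cones across the whole $G$-orbit. The resulting fan $\fan_0$ is a refinement of $\fan$ (each of its cones lies in some cone of $\fan$) and is manifestly carried to itself by every $g\in G$, since applying $g$ merely permutes the generating collection $\{g(\sigma)\}$ and hence preserves the common refinement. This uses only that $G$ is finite, so the collection is finite and the common refinement is again a fan.

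Next I would make $\fan_0$ simplicial (and, for smoothness, regular) in a $G$-equivariant way. The standard procedure is a sequence of star subdivisions: one subdivides a non-simplicial or non-regular cone $\sigma$ by adding a new ray through a suitable lattice point, and iterates. To keep $G$-invariance, I would perform these subdivisions simultaneously over each $G$-orbit of cones: when I add a ray generator $v$ to subdivide $\sigma$, I add $g(v)$ to subdivide $g(\sigma)$ for every $g\in G$ at the same time. Because $v$ is chosen canonically (for regularity, by the usual toric resolution algorithm measuring the multiplicity of the cone) and $g$ is a lattice automorphism, the subdivision data transports along the $G$-action, so the whole step commutes with $G$ and terminates for the same reason the non-equivariant algorithm does. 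This yields a $G$-invariant regular refinement $\fan_1$.

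Finally I would upgrade to projectivity while retaining $G$-invariance. Projectivity of a complete toric variety is equivalent to $\fan_1$ being the normal fan of a polytope, equivalently to the existence of a strictly convex $\fan_1$-piecewise-linear support function $\psi$ on $N_\R$. Starting from any such $\psi$ realizing the projective structure, I would average over the group, replacing $\psi$ by $\sum_{g\in G}\psi\circ g$; strict convexity is preserved under this finite sum of strictly convex functions, and the averaged function is $G$-invariant, so the associated projective (possibly coarser) fan is $G$-stable. Passing to the common refinement of $\fan_1$ with this projective fan and re-running the equivariant regularization step of the previous paragraph produces the desired $\fan'$: smooth, projective, and $G$-invariant.

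The main obstacle I anticipate is the second step, the equivariant resolution of singularities: one must verify that the canonical choices in the toric smoothing algorithm can genuinely be made $G$-orbit-wise and that no conflict arises when two group elements act on overlapping faces, together with the bookkeeping needed to guarantee termination in the equivariant setting. Once equivariance of the smoothing is secured, the projectivity step is comparatively routine via the averaging trick. This is exactly the content proved in the cited references \cite{Br,THS}, so I would ultimately invoke those; the sketch above indicates the construction underlying the statement.
\end{proof}
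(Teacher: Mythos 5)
The paper never proves this proposition: it is quoted as a known result from \cite{Br,THS}, and the result the paper \emph{does} prove in this section is the variant Proposition~\ref{thm_res_fan}, which deliberately drops smoothness. So the comparison is really with the cited references and with that variant. Your three-stage outline (equivariant common refinement, equivariant desingularization, averaging a strictly convex support function) is indeed the architecture of the proofs in \cite{Br,THS}, but as written it is not a proof: all of the mathematical content sits in the step you yourself flag as the obstacle and then defer back to \cite{Br,THS}, which makes the argument circular as a standalone verification. Concretely, the equivariant smoothing is delicate exactly when a cone $\sigma$ has a nontrivial stabilizer in $G$: there, ``add $g(v)$ to subdivide $g(\sigma)$'' means inserting the whole orbit of $v$ into the single cone $\sigma$; a star subdivision at an orbit of points need not define a fan, need not decrease the multiplicity, and the ``canonical'' choice of $v$ in the toric resolution algorithm (a primitive lattice point realizing the multiplicity) is not unique, hence not automatically stabilizer-invariant. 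Organizing these choices equivariantly and proving termination is precisely Brylinski's theorem, not bookkeeping.

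Your third step has a second, independent gap: after the smoothing stage there is no reason for $\fan_1$ to be projective at all (complete smooth nonprojective fans exist once $n\ge 3$), so ``starting from any such $\psi$'' may start from nothing. The fix is to pass first to some projective refinement $\fan_2$ of $\fan_1$ with strictly convex support function $\psi$; then $\sum_{g\in G}\psi\circ g$ is strictly convex not relative to $\fan_2$ but relative to the common refinement $\bigwedge_{g\in G} g^{-1}\fan_2$, which is the $G$-invariant projective fan one should take, after which one must re-resolve equivariantly and check that the equivariant star subdivisions preserve projectivity. All of this is fixable, but the order of operations matters. It is instructive to contrast with the route the paper takes for Proposition~\ref{thm_res_fan}: refine $\fan$ by \emph{all} hyperplanes containing its $(n-1)$-dimensional cones, which is automatically projective by \cite[Proposition 2.17]{O}, and then intersect the translates $g\fan_1$ over the group; the intersection is again a hyperplane-arrangement fan, hence still projective, and is manifestly invariant. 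That one-stroke argument buys invariance and projectivity simultaneously and avoids equivariant resolution entirely, at the cost of settling for a simplicial (rather than smooth) model, which is all the paper needs for its application.
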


In \cite{Br, THS}, the authors use the above result to show the existence of smooth
projective compactification of an algebraic torus over a scheme.
The proposition also gives an explicit way to construct the resolution of indeterminacy of pairs
in the case of toric varieties. Moreover, this explicit resolution works for all characteristic.

\begin{thm}[Resolution of Indeterminacy of Pairs]
Let $X(\fan)$ be a toric variety and $G$ be a finite group of equivariant birational maps of $X(\fan)$.
Then there exists a smooth projective toric variety $X(\fan')$ and a birational morphism $\pi: X(\fan')\to X$
such that for each $g\in G$ the composite map $\pi^{-1} g \pi$ is an equivariant automorphism of $X(\fan')$.
In other words, elements in $G$ are birational conjugate to automorphisms of $X(\fan')$.
\hfill\qed
\end{thm}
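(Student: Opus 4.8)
The plan is to reduce the statement about a finite group $G$ of equivariant birational maps to the purely lattice-theoretic Proposition just quoted, which produces a smooth projective $G$-invariant refinement. The key observation is that every equivariant birational selfmap of $X(\fan)$ is a monomial map $f_A$ for some $A\in GL_n(\Z)$ (an invertible integer matrix), because being equivariant forces the map to be induced by a lattice homomorphism $A:N\to N$, and being birational forces $A$ to be an isomorphism of $N$, hence $A\in GL_n(\Z)\cong\mathrm{Aut}(N)$. First I would make this identification precise: the assignment $g\mapsto A_g$ sending each $g\in G$ to its defining matrix is a group homomorphism from $G$ into $\mathrm{Aut}(N)$, since composition of monomial maps corresponds to multiplication of the associated matrices. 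Let $\bar G\subseteq\mathrm{Aut}(N)$ denote the image of this homomorphism; it is a finite group of automorphisms of the lattice $N$.

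Next I would apply the quoted Proposition directly to the finite group $\bar G\subseteq\mathrm{Aut}(N)$ and the fan $\fan$. This yields a refinement $\fan'$ of $\fan$ that is simultaneously smooth, projective, and invariant under $\bar G$. Invariance of the fan under $\bar G$ means that for each $g\in G$ with matrix $A_g$, we have $A_g(\sigma)\in\fan'$ for every cone $\sigma\in\fan'$; equivalently, $A_g$ permutes the cones of $\fan'$. Because $\fan'$ refines $\fan$, the identity on $N$ induces a birational morphism $\pi:X(\fan')\to X(\fan)=X$.

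The final step is to verify that each composite $\pi^{-1}g\pi$ is an equivariant automorphism of $X(\fan')$. I would argue as follows: $\pi^{-1}g\pi$ is again the monomial map associated to the matrix $A_g$, now regarded as a selfmap of $X(\fan')$. A monomial map $f_{A_g}:X(\fan')\dashrightarrow X(\fan')$ is a morphism (indeed an automorphism, since $A_g$ is invertible over $\Z$) precisely when $A_g$ carries each cone of $\fan'$ into a cone of $\fan'$, i.e., when each cone maps regularly to $\fan'$; this is exactly the $\bar G$-invariance established above, together with the fact that a regular cone maps to a regular cone under a lattice automorphism. Thus $f_{A_g}$ extends to an everywhere-defined toric morphism $X(\fan')\to X(\fan')$ with inverse $f_{A_g^{-1}}$, so it is an equivariant automorphism.

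The main obstacle, and the only genuinely substantive point, is the clean identification of $G$ with a finite subgroup of $\mathrm{Aut}(N)$: one must confirm that equivariance really forces each $g$ to be monomial (so that $g$ is $f_{A_g}$ for a unique $A_g$) and that birationality forces $A_g\in GL_n(\Z)$ rather than merely a rational isomorphism. Once this is in hand, the conclusion follows essentially formally from the quoted Proposition, since fan invariance is exactly the condition—via Theorem~\ref{thm:sas} specialized to an invertible matrix—guaranteeing that the monomial map is a morphism. The smoothness and projectivity of $X(\fan')$ are delivered directly by the Proposition.
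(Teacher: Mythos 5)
Your proposal is correct and follows exactly the route the paper intends: the paper states this theorem with no written proof (just \qed) because it regards it as an immediate translation of the quoted Proposition of Brylinski and Colliot-Th\'el\`ene--Harari--Skorobogatov, which is precisely the reduction you carry out. Your write-up simply makes explicit the identification of equivariant birational selfmaps with $GL_n(\Z)$ matrices and the fact that fan invariance turns each $f_{A_g}$ into a toric automorphism of $X(\fan')$ --- the same argument, with the omitted details filled in.
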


However, to suit our general purpose, we need a different condition. Recall that the set of dominant monomial maps
of $n$-dimensional toric varieties corresponds to the set of integer matrices with nonzero determinant:
\[
M_n(\Z)\cap GL_n(\Q) = \{ A\in M_n(\Z) | \det(A)\ne 0\}.
\]
Consider matrices of the form $d\cdot I_n$ with $d>0$, every cone is invariant under the action of $d\cdot I_n$.
Thus every fan is invariant, too. Furthermore, denote $\Q^+$ as the set of positive rational numbers
and identify $\Q^+$ with $\Q^+\cdot I_n$. Let $\F$ be the set of complete rational fans in $N_\R$. Then
$GL_n(\Q)$ acts on $\F$, and
\[
\Q^+ = \bigcap_{\fan\in\F} GL_n(\Q)_\fan,
\]
where $GL_n(\Q)_\fan$ is the stabilizer of $\fan\in\F$ in $GL_n(\Q)$.

Let $p:GL_n(\Q)\to PGL^+_n(\Q):= GL_n(\Q) / \Q^+$ be the projection map.

\begin{prop}
\label{thm_res_fan}
Let $N$ be a lattice and $\fan\cong \Z^n$ be a fan of $N_\R$. Let $G$ be a submonoid of $M_n(\Z)\cap GL_n(\Q)$.
If $p(G)$ is finite,
then there exists a refinement $\fan'$ of $\fan$ which is projective and invariant by $G$.
\end{prop}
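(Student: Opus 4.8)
The plan is to reduce the monoid hypothesis to a genuine finite \emph{group} acting on fans and then build an invariant refinement by orbit-averaging. First I would observe that, although $G$ is only a submonoid of $M_n(\Z)\cap GL_n(\Q)$ and its elements need not be invertible over $\Z$, the assumption that $\bar G:=p(G)$ is finite forces $\bar G$ to be a subgroup of $PGL^+_n(\Q)$. Indeed $\bar G$ is a finite submonoid of a group, so for each $\bar A\in\bar G$ the elements $\bar A,\bar A^2,\dots$ cannot all be distinct, whence $\bar A^{a}=\bar A^{b}$ for some $a<b$ and $\bar A^{b-a}$ is the identity; thus $\bar A$ is invertible inside $\bar G$. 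Next I would record that the positive scalar matrices $\Q^+=\Q^+\cdot I_n$ fix every cone, hence every fan, so the action of $GL_n(\Q)$ on $\F$ descends to $PGL^+_n(\Q)$. Consequently a fan is invariant under $G$ if and only if it is invariant under the finite group $\bar G$, and the problem is recast as producing a projective, $\bar G$-invariant refinement of $\fan$.

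To build such a refinement I would first choose a projective refinement $\fan_1$ of $\fan$ — here $\fan$ is complete, as throughout the ambient discussion of $\F$, and every complete fan admits a projective refinement by standard toric geometry — and then set $\fan'$ to be the coarsest common refinement of the finitely many complete fans $\{\bar A\cdot\fan_1 : \bar A\in\bar G\}$. Since $\bar G$ is a group, for any $\bar B\in\bar G$ left multiplication $\bar A\mapsto\bar B\bar A$ permutes $\bar G$; and because a linear automorphism commutes with the intersections that define the common refinement, one gets $\bar B\cdot\fan'=\fan'$. Hence $\fan'$ is $\bar G$-invariant, and therefore $G$-invariant. It refines $\fan_1$, and thus $\fan$, because the identity element of $\bar G$ contributes $\fan_1$ itself to the common refinement.

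It then remains to verify projectivity, which I would carry out through support functions. Each $\bar A\cdot\fan_1$ is projective: if $\psi_1$ is a strictly convex rational piecewise-linear support function for $\fan_1$, then $\psi_1\circ\bar A^{-1}$ is one for $\bar A\cdot\fan_1$, where I only use that $\bar A$ is a rational linear automorphism of $N_\R$ and never that it preserves the lattice $N$. Summing these functions over $\bar A\in\bar G$ produces a convex piecewise-linear function that is linear exactly on the maximal cones of the common refinement and whose kinks across the walls add up without cancelling; this is a strictly convex support function for $\fan'$, so $\fan'$ is projective. (Equivalently, $\fan'$ is the normal fan of the Minkowski sum of the polytopes attached to the $\bar A\cdot\fan_1$.) Note that I only claim projectivity, not smoothness, which is forced upon us: a non-integral $\bar A$ will not preserve regularity of cones, so a smooth $\bar G$-invariant refinement cannot be expected.

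The step I expect to be the crux is the first one — passing from the multiplicative monoid $G$ of (generally non-invertible) integer matrices to the finite group $\bar G$ inside $PGL^+_n(\Q)$. This is precisely where the finiteness of $p(G)$ is indispensable, since it is what supplies the inverses that make the orbit-averaging argument work; without it the common refinement of the $G$-translates of $\fan_1$ need not be $G$-invariant. The remaining ingredients — that $\Q^+$ acts trivially, and that projectivity is preserved both under rational linear automorphisms and under common refinement — are standard and should present no genuine difficulty.
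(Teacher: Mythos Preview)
Your argument is correct and follows the same core strategy as the paper: form the common refinement of the finitely many translates $g\fan_1$ and observe that this is $G$-invariant. Two points of comparison are worth noting. First, the paper simply asserts that $\fan'=\bigcap_{g\in G} g\fan_1$ is $G$-invariant, which implicitly relies on exactly the observation you spell out---that a finite submonoid of a group is a group, so that left translation by any $\bar h\in p(G)$ permutes the set of translates; your proof makes this step explicit. Second, the paper chooses $\fan_1$ to be the fan cut out by the hyperplanes spanned by the codimension-one cones of $\fan$, so that every translate $g\fan_1$ and their common refinement are again hyperplane arrangements, and projectivity follows directly from \cite[Proposition~2.17]{O}; your route via summing strictly convex support functions (equivalently, Minkowski-summing the associated polytopes) is a standard and equally valid alternative. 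Both approaches yield only a projective (not smooth) refinement, for the reason you identify.
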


\begin{proof}
First, take all the hyperplanes in $N_\R$ which contain some of the $(n-1)$-dimensional cones of $\fan$.
These hyperplanes determine a complete refinement $\fan_1$ of $\fan$. Moreover, the corresponding variety $X(\fan)$ is
projective (see \cite[Proposition 2.17]{O}).

Then we consider the fan
\[
\fan' = \bigcap_{g\in G} g \fan_1,
\]
where $g \fan_1 = \{ g\sigma | \sigma\in\fan_1\}$. The intersection is in fact finite, because $p(G)$ is a finite set.
Thus $\fan'$ is a finite fan.
Moreover, the fan $\fan'$ is invariant under $G$, and it is also projective since it contains all the hyperplanes
spanned by its $(n-1)$-dimensional cones.
\end{proof}

One can translate the above proposition into the following toric version.

\begin{prop}
\label{thm_res_morphism}
Let $X(\fan)$ be a toric variety and $G$ be a submonoid of $M_n(\Z)\cap GL_n(\Q)$ with $p(G)$ finite.
Then there exists a projective toric variety $X(\fan')$ and a birational morphism $\pi: X(\fan')\to X$
such that for each $A\in G$ the composite map $\pi^{-1}\circ f_A\circ \pi$ is an equivariant morphism of $X(\fan')$.
\hfill\qed
\end{prop}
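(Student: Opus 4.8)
The plan is to read this off directly from Proposition~\ref{thm_res_fan}, which already supplies all of the geometric input. First I would apply Proposition~\ref{thm_res_fan} to the submonoid $G\subseteq M_n(\Z)\cap GL_n(\Q)$: since $p(G)$ is finite, there is a refinement $\fan'$ of $\fan$ that is projective and invariant under $G$. Because $\fan'$ refines $\fan$, the identity map on $N$ induces the standard proper toric morphism $\pi:X(\fan')\to X(\fan)=X$, and $\pi$ is birational since it restricts to the identity on the dense torus $T$ shared by both varieties.

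Next I would check that each $f_A$ is genuinely a morphism on $X(\fan')$. The $G$-invariance of $\fan'$ means $A\fan'=\fan'$ for every $A\in G$; as $A\in GL_n(\Q)$ is invertible, it sends each cone $\sigma\in\fan'$ to a cone $A(\sigma)$ that again lies in $\fan'$. Taking $\sigma'=A(\sigma)$ we then have $A(\sigma)\subseteq\sigma'\in\fan'$, so every cone of $\fan'$ maps regularly to $\fan'$ under $A$ in the sense of Section~\ref{sec:toric}. This is exactly the condition guaranteeing that the equivariant rational map $f_A:X(\fan')\dashrightarrow X(\fan')$ is defined everywhere, hence is an equivariant morphism.

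Finally I would identify the birational conjugate $\pi^{-1}\circ f_A\circ\pi$ with this morphism. On the common dense torus $T$ the map $\pi$ is the identity and $f_A$ restricts to the monomial group homomorphism attached to $A$; therefore $\pi^{-1}\circ f_A\circ\pi$ and the monomial map $f_A$ on $X(\fan')$ agree on $T$, and two rational maps that agree on a dense open set coincide. Since the latter is a morphism by the previous step, so is $\pi^{-1}\circ f_A\circ\pi$, and it is equivariant because it is induced by the lattice homomorphism $A$. I do not expect a genuine obstacle here, as all the substantive work was done in Proposition~\ref{thm_res_fan}; the only points deserving a little care are translating the \emph{$G$-invariance of the fan} into the \emph{maps-regularly} criterion for a morphism, and verifying that the conjugate $\pi^{-1}\circ f_A\circ\pi$ really is the monomial map on $X(\fan')$ rather than merely birational to it.
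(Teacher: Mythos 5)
Your proposal is correct and takes essentially the same approach as the paper: the paper offers no separate argument for this proposition, stating it as the direct toric translation of Proposition~\ref{thm_res_fan}, which is exactly the dictionary you spell out. The steps you supply — $G$-invariance of $\fan'$ implying every cone maps regularly so that $f_A$ is an equivariant morphism on $X(\fan')$, and identifying $\pi^{-1}\circ f_A\circ\pi$ with that monomial map because both agree on the dense torus — are precisely the translation the paper leaves implicit.
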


\begin{rem}
Notice that, in the proof of Proposition~\ref{thm_res_fan}, one can make a simplicial refinement $\fan''$ of $\fan'$
such that $\fan''(1)=\fan'(1)$. Then we obtain a projective, simplicial toric variety on which $f_A$ is 1-stable for all $A\in G$.
\end{rem}

\begin{cor}
\label{cor:same_modulus}
Let $X(\fan)$ be a toric variety and $A\in M_n(\Z)\cap GL_n(\Q)$ be diagonalizable.
Assume all the eigenvalues of $A$ are of the same modules, and
$\mu/\bar{\mu}$ is a root of unity for each eigenvalue $\mu$.
Then
\begin{itemize}
\item[(1)]
there exists a projective toric variety $X(\fan')$ and a birational morphism $\pi: X(\fan')\to X$
such that $f_A$ is conjugate to a morphism on $X(\fan')$.
\item[(2)]
there exists a projective, simplicial toric variety $X(\fan'')$ and a birational morphism $\pi: X(\fan'')\to X$
such that the conjugate $\widetilde{f}_A$ is both 1-stable and 2-stable on $X(\fan'')$.
\end{itemize}

\end{cor}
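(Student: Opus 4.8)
The plan is to reduce both parts to the resolution-of-indeterminacy results of this section, so that the only genuine work is to verify their hypothesis: that the relevant monoid has finite image in $PGL^+_n(\Q)$. First I would analyze the spectrum. Since every eigenvalue $\mu$ of $A$ has the same modulus $r>0$, I write $\mu=r\omega$ with $|\omega|=1$; then $\mu/\bar{\mu}=\omega/\bar{\omega}=\omega^2$, so the assumption that $\mu/\bar{\mu}$ is a root of unity forces each $\omega$ to be a root of unity. Letting $m$ be the least common multiple of the orders of these $\omega$, every eigenvalue of $A^m$ equals $r^m$, and since $A$ (hence $A^m$) is diagonalizable, $A^m=r^m I_n$. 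As $A^m$ is an integer matrix, $d:=r^m$ is a positive integer, so $A^m=d\cdot I_n\in\Q^+\cdot I_n$ and therefore $p(A)^m=1$ in $PGL^+_n(\Q)$.

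Next I would carry along the adjoint $A'=|\det(A)|\,A^{-1}=r^n A^{-1}$, which commutes with $A$ because it is a scalar multiple of $A^{-1}$. From $A^m=r^m I_n$ one computes directly $(A')^m=r^{nm}A^{-m}=r^{(n-1)m}I_n$, again an integer multiple of $I_n$, so $p(A')^m=1$ as well. Let $G$ be the submonoid of $M_n(\Z)\cap GL_n(\Q)$ generated by $A$ and $A'$. Since $A$ and $A'$ commute and $p(A),p(A')$ both have order dividing $m$, the image $p(G)$ lies in the finite abelian group they generate, hence $p(G)$ is finite. Now Proposition~\ref{thm_res_morphism} applies to $G$ and yields a projective toric variety $X(\fan')$ with a birational morphism $\pi$ on which both $f_A$ and $f_{A'}$ are equivariant morphisms; applied to $A$ this is exactly part~(1). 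Passing to the simplicial refinement $\fan''$ of $\fan'$ with $\fan''(1)=\fan'(1)$ furnished by the Remark following Proposition~\ref{thm_res_morphism}, I obtain a projective simplicial $X(\fan'')$ on which $f_A$ and $f_{A'}$ are both $1$-stable (the invariance of $\fan'$ under $G$ means $A$ and $A'$ permute its rays, so Theorem~\ref{thm:sas} is met).

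For part~(2) the $1$-stability of $f_A$ on $X(\fan'')$ is now in hand, and the $(n-1)$-stability — which is $2$-stability in the dimension three setting — follows by duality: Corollary~\ref{cor:dual_stable} gives that $f_A$ is $(n-1)$-stable on $X(\fan'')$ if and only if $f_{A'}$ is $1$-stable there, and the latter holds by construction. Hence $\widetilde{f}_A$ is simultaneously $1$-stable and $2$-stable on $X(\fan'')$. The main obstacle is the opening step: recognizing that the two spectral hypotheses together force a power of $A$ to be a scalar matrix, which is precisely the finiteness of $p(\langle A\rangle)$ that triggers the resolution machinery. Once this is established, carrying $A'$ along to extract $2$-stability through the intersection-pairing duality is routine.
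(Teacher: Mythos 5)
Your proof is correct and takes essentially the same approach as the paper: both parts reduce to Proposition~\ref{thm_res_morphism} and the Remark following Proposition~\ref{thm_res_fan}, with $2$-stability extracted from $1$-stability of $f_{A'}$ via the duality of Corollary~\ref{cor:dual_stable}, and your opening spectral argument just makes explicit the finiteness of $p(\langle A\rangle)$ that the paper leaves implicit. The only cosmetic difference is that you adjoin $A'$ to the monoid (which costs nothing, since $A'=|\det A|\,A^{-1}$ gives $p(A')=p(A)^{-1}$, so the image in $PGL^+_n(\Q)$ is unchanged), whereas the paper keeps $G=\langle A\rangle$ and observes directly that the $A$-invariant fan has its rays permuted by $A$, hence by $A^{-1}$ and thus by $A'$.
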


\begin{proof}
For part (1), apply Proposition~\ref{thm_res_morphism} to the monoid generated by $A$.
For (2), pick $\fan''$ to be the fan in the remark after Proposition~\ref{thm_res_fan} for G to be the monoid generated by $A$.
Recall that $A'=|\det(A)|\cdot A^{-1}$, and the fan $\fan''$ satisfies
$-\fan''=\fan''$. Thus, the rays of $\fan''$ still map to rays of $\fan''$, which means
$X(\fan'')$ is 1-stable for both $\widetilde{f}_A$ and $\widetilde{f}_{A'}$. Therefore, $\widetilde{f}_A$ is also 2-stable
on $X(\fan'')$. This concludes the proof of part (2).
\end{proof}

\section{Proof of Theorem~\ref{thm:main_1}}
\label{sec:proof_main_thm}

Under the assumption of Theorem~\ref{thm:main_1}, there are 4 cases:
$|\mu_1|>|\mu_2|>|\mu_3|$, $|\mu_1|=|\mu_2|>|\mu_3|$, $|\mu_1|>|\mu_2|=|\mu_3|$, and $|\mu_1|=|\mu_2|=|\mu_3|$.

The first case is proved in Corollary~\ref{cor:stables}. Indeed, in this case, the refinement $\fan'$ can
be regular and projective. The second case is shown in the discussion after Proposition~\ref{prop:two_real_pos_eval}.
The third case is dual to the second case, thus is also done.

Finally, when we have $|\mu_1|=|\mu_2|=|\mu_3|$, this case is proved in Corollary~\ref{cor:same_modulus}. Therefore,
we proved all cases for Theorem~\ref{thm:main_1}.\hfill\qed

\section{The case with a conjugate pair of eigenvalues whose argument is an irrational multiple of $2\pi$}
\label{sec:complicated_cases}

In the case where there are two complex eigenvalues $\mu,\bar{\mu}$, with $\mu/\bar{\mu}$ not a root of unity,
there are three possible cases. Assume that $\nu$ is the third (real) eigenvalue, then we can have
$|\mu|>|\nu|$, $|\mu|<|\nu|$, or $|\mu|=|\nu|$. The first and the second cases are dual to each other. In fact,
if $A$ is in the first case, the $A'$ will be in the second, and vise versa. Thus, we only need to consider the
first and the third cases.

\subsection{Case 1: $|\mu|>|\nu|$}

For the first case, by the following theorem \cite[Theorem~4.7(2)]{ljl}, we know we cannot make $f_A$ 1-stable.
\begin{thm*}
\label{thm:stable}
Suppose that $A\in \M_n(\Z)$ is a integer matrix.
If $\mu, \bar{\mu}$ are the only eigenvalues of $A$ of maximal modulus, with algebraic multiplicity
one, and if $\mu/\bar{\mu}$ is not a root of unity; then there is no toric birational model
which makes $f_A$ strongly algebraically stable.
\end{thm*}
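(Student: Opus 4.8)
The plan is to argue by contradiction using the geometric criterion for strong stability, Theorem~\ref{thm:sas}, so as to treat every toric model (every complete fan, projective or not) at once. Suppose some complete fan $\fan$ in $N\cong\Z^n$ makes $f_A$ strongly $1$-stable. Since $A$ is a real integer matrix and $\mu,\bar\mu$ are simple nonreal eigenvalues, there is a real $A$-invariant plane $\Gamma\subset N_\R$ on which $A$ acts as a rotation-scaling: in a suitable basis $A|_\Gamma=|\mu|\,R_\theta$ with $\theta=\arg\mu$. The hypothesis that $\mu/\bar\mu=e^{2i\theta}$ is not a root of unity says exactly that $\theta/\pi$ is irrational, so the induced self-map $g$ of the circle $S_\Gamma$ of directions in $\Gamma$ is conjugate to an irrational rotation; in particular $g$ is minimal and has no nonempty finite invariant subset. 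Because $\mu,\bar\mu$ are the \emph{only} eigenvalues of maximal modulus, for any vector $v$ with nonzero $\Gamma$-component the direction of $A^n v$ converges to $\Gamma$ and its angular part is $g^n$ applied to the $\Gamma$-direction of $v$; thus these directions are dense in $S_\Gamma$.

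Next I would pass to the induced fan on the plane. The traces $\fan_\Gamma:=\{\sigma\cap\Gamma:\sigma\in\fan\}$ form a complete fan in $\Gamma\cong\R^2$ whose finitely many rays $\rho_1,\dots,\rho_s$, with $s\ge 3$, cut $S_\Gamma$ into maximal sectors. The key point is to feed the stability criterion into this picture. Fix a ray $\tau_0\in\fan(1)$ whose generator has nonzero $\Gamma$-component (one exists, since the ray generators of a complete fan span $N_\R$ while the sum of the remaining eigenspaces is a proper subspace). By Theorem~\ref{thm:sas}, for every $n$ the cone $\overline{A^n(\tau_0)}$ maps regularly, i.e.\ $A\bigl(\overline{A^n(\tau_0)}\bigr)$ lies in a single cone of $\fan$; intersecting this containment with the $A$-invariant plane $\Gamma$ shows that $A|_\Gamma$ sends the trace $\overline{A^n(\tau_0)}\cap\Gamma$ into a single cone of $\fan_\Gamma$. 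Using the density above, every maximal sector of $\fan_\Gamma$ arises as such a trace $\overline{A^n(\tau_0)}\cap\Gamma$ for suitable large $n$, so $g$ carries each maximal sector into a single sector. A map that sends every sector into a sector must send boundary rays to boundary rays, so $g$ permutes the finite set $\{\rho_1,\dots,\rho_s\}$.

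Finally I would extract the contradiction: a permutation of a finite nonempty set has finite order, so some power $g^m$ fixes every $\rho_i$, forcing $m\theta\in 2\pi\Z$, hence $\theta/\pi\in\Q$ and $\mu/\bar\mu$ a root of unity, against the hypothesis. Since $\fan_\Gamma$ is a complete fan in the plane it has $s\ge 3>0$ rays, so the invariant set is genuinely nonempty and the contradiction is real. As no complete fan can make $f_A$ strongly $1$-stable, no toric birational model does.

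I expect the main obstacle to be the transversality step linking the rational orbit to the irrational plane $\Gamma$: although the normalized orbit directions accumulate on $S_\Gamma$, the orbit rays themselves never lie in $\Gamma$ (which is irrational, so $\Gamma\cap N=0$), and one must check that for each maximal sector $S$ of $\fan_\Gamma$ there are large $n$ with $\overline{A^n(\tau_0)}\cap\Gamma=S$ exactly, that is, that the small off-plane component of $A^n\tau_0$ does not push it into a neighbouring cone. This is automatic when $\Gamma$ meets the interior of the corresponding maximal cone of $\fan$, and the delicate case is when $\Gamma$ lies tangentially inside the codimension-one skeleton; handling that degenerate position, for instance by restricting to directions in a compact sub-arc of each sector or by a small-perturbation argument, is the technical heart of the proof. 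I also note the alternative, degree-theoretic route: strong stability forces $\{\deg_{D,1}(f_A^k)\}$ to satisfy a linear recurrence (as in Proposition~\ref{prop:deg_rec}), whereas the mixed-volume formula gives an asymptotic of the shape $|\mu|^k G(k\theta)$ with $G$ a non-smooth, hence non-trigonometric-polynomial, profile, which is incompatible with a recurrence by Weyl equidistribution; but this route applies directly only to projective models, so the criterion-based argument above is preferable for full generality.
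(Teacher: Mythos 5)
Your plan is correct in outline, but it is not the paper's argument, and its two hinges are left as assertions. Note first that this paper never proves the statement: it is quoted from \cite[Theorem~4.7(2)]{ljl}, and the mechanism used there --- echoed inside this paper in the proof of Lemma~\ref{lem:unstable} and in Case 3 of Section~\ref{sec:complicated_cases} --- is an \emph{invariant-cone} argument that bypasses your trace fan entirely. Set $\sigma_k=\overline{A^k(\tau_0)}$ for a ray $\tau_0$ with nonzero $\Gamma$-component. Theorem~\ref{thm:sas} forces $A(\sigma_k)\subseteq\sigma_{k+1}$: if $A(\sigma_k)\subseteq\sigma'\in\fan$, then the ray $A^{k+1}(\tau_0)$ lies in the relative interior of the convex cone $A(\sigma_k)$ and of the face $\sigma_{k+1}$ of $\sigma'$, and a convex subset of $\sigma'$ meeting the relative interior of a face lies in that face. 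Since $\fan$ is finite, $\sigma_{k_1}=\sigma_{k_2}=:\sigma$ for some $k_1<k_2$, so $A^p(\sigma)\subseteq\sigma$ with $p=k_2-k_1$. By your own density argument (applied to $A^p$, whose angle $p\theta$ is again irrational mod $2\pi$), the directions of $A^{mp}(A^{k_1}\tau_0)$, which all lie in the closed cone $\sigma$, accumulate on the whole circle of directions of $\Gamma$; hence $\Gamma\subseteq\sigma$, contradicting strict convexity. One pigeonhole, no planar fan, no circle combinatorics.

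That said, your route does close. For step (a), the one you flag: fix a compact sub-arc of the arc of a sector $S$, take the infinitely many $n$ whose angular part lies in it, extract (finiteness of $\fan$) a single cone $\sigma$ recurring as $\overline{A^n(\tau_0)}$ and a convergent subsequence of directions; the limit direction lies in the closed cone $\sigma$ and in the relative interior of $S$, and in the fan $\fan_\Gamma$ any cone containing a relative interior point of a maximal sector has trace exactly that sector. So recurrence of one cone closure, not the position of $\Gamma$ relative to the skeleton, is what matters; there is no bad ``tangential'' case. For step (b), ``sectors into sectors implies rays permute'' is not automatic for a circle map: you need a covering/pigeonhole argument (if two sectors mapped into one, the images of the remaining $s-2$ sectors could not cover the interiors of the other $s-1$ sectors), or length-preservation for your rotation $g$. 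Nothing in your proposal fails, but both hinges require exactly the finiteness devices that the cited proof deploys once.
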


Next, notice that the 2-stabilization problem for the case $|\mu|>|\nu|$ is equivalent to the 1-stabilization problem
for the case $|\mu|<|\nu|$.

\subsection{Case 2: $|\mu|<|\nu|$}

We also consider the 1-stabilization problem for this case.
First, if we do not start with any given toric variety, then we can certainly find some simplicial toric variety
$X(\fan)$ such that $f_{A}$ is 1-SS on $X(\fan)$ (see \cite[Theorem~4.7(1)]{ljl}). However, the situation is more
complicated when we are given a fixed toric variety $X(\fan)$, and want to stabilize $f_A$ by
refining $\fan$. We need to consider several subcases.

Under the above assumption, let $v$ be an eigenvector associated with $\nu$, and let $\gamma = \R_+ v$.
Let $\Gamma$ be the two-dimensional invariant subspace associated with the eigenvalues $\mu, \bar{\mu}$.
First, we need a lemma.

\begin{lem}
\label{lem:unstable}
Under the above assumption, suppose that $\fan$ satisfies the following two conditions:
\begin{itemize}
\item  $\gamma$ lies on a lower dimensional cone, (i.e., a cone of dimension $\le 2$), and
\item  there is some ray $\gamma_1\in\fan(1)$, $\gamma_1\ne\gamma$, such that $A^k\gamma_1\to\gamma$ as $k\to\infty$
(this means, the angle between $\gamma_1$ and $\gamma$ goes to $0$ as $k\to\infty$).
\end{itemize}
Then, for any refinement $\fan'$ of $\fan$, $f_A$ is not
$1$-stable on $\fan'$, and therefore we cannot stabilize $f_A$ by subdividing $\fan$.
\end{lem}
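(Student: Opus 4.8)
The plan is to argue by contradiction, combining the geometric stability criterion of Theorem~\ref{thm:sas} with the spiraling dynamics forced by the irrational rotation in $\Gamma$. First I would fix the eigendata: write $N_\R = \Gamma \oplus \R v$, so that $A$ acts on $\Gamma$ as a scaling--rotation by $\mu = r e^{i\theta}$ (with $r = |\mu|$ and, since $\mu/\bar\mu$ is not a root of unity, $\theta$ an irrational multiple of $2\pi$) and on $\R v$ as multiplication by $\nu$. After replacing $A$ by $A^2$ if necessary we may assume $\nu > 0$, and the hypothesis $|\mu| < |\nu|$ then makes $\gamma = \R_+ v$ an attracting direction. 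Writing a generator of $\gamma_1$ as $w = w_\Gamma + c\,v$ with $w_\Gamma \ne 0$ (as $\gamma_1 \ne \gamma$) and $c > 0$ (from the second hypothesis together with $\nu>0$), the iterate $A^k w = \mu^k w_\Gamma + \nu^k c\, v$ approaches $\gamma$ while its transverse component $\mu^k w_\Gamma$ winds around $\gamma$ with angle $\arg(w_\Gamma) + k\theta$, which equidistributes on the circle $S^1$ of transverse directions.

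Next, suppose toward a contradiction that $f_A$ is $1$-stable on some refinement $\fan'$ of $\fan$. The ray $\gamma_1$ is still a ray of $\fan'$, since a refinement subdivides cones and cannot delete a one-dimensional cone, and the first hypothesis on $\fan$ persists under refinement, so the minimal cone of $\fan'$ containing $\gamma$ still has dimension $\le 2$; in particular $\gamma$ is never interior to a maximal cone, and the maximal cones of $\fan'$ adjacent to $\gamma$ cut $S^1$ into finitely many arcs $I_\sigma$. By Theorem~\ref{thm:sas}, $\overline{A^k(\gamma_1)}$ maps regularly for every $k$. Because $\theta$ is irrational and there are only finitely many walls, for all but finitely many $k$ the ray $A^k(\gamma_1)$ lies in the relative interior of a maximal cone $\sigma_k$, so $\overline{A^k(\gamma_1)} = \sigma_k$ and regularity forces $A\sigma_k$ into a single cone of $\fan'$. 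Translated to the transverse picture, this says precisely that rotation by $\theta$ carries the arc $I_{\sigma_k}$ into a single arc, which must be $I_{\sigma_{k+1}}$ since $A^{k+1}(\gamma_1) = A(A^k(\gamma_1))$ lies in $A\sigma_k$.

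The contradiction then comes from a width-monotonicity argument. The visited arcs form a chain $I_{\sigma_{k_0}} \to I_{\sigma_{k_0+1}} \to \cdots$ under the rule $R_\theta(I_{\sigma_k}) \subseteq I_{\sigma_{k+1}}$, where $R_\theta$ denotes rotation by $\theta$. Since rotation preserves length, an arc can only be sent into an arc of at least its own length, so the lengths are nondecreasing along the chain; as there are finitely many arcs the lengths stabilize, and once they are constant each inclusion becomes an equality of arcs. On the resulting eventually periodic tail, of period $L$ say, $R_{L\theta}$ fixes an arc setwise, whence $L\theta \equiv 0 \pmod{2\pi}$ and $\theta$ is a rational multiple of $2\pi$, contradicting that $\mu/\bar\mu$ is not a root of unity. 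Therefore no refinement $\fan'$ can make $f_A$ $1$-stable.

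I expect the main obstacle to be the second step, namely making the transverse arcs rigorous and checking that regularity really translates into ``rotation by $\theta$ sends each visited arc into a single arc.'' This is cleanest when $\gamma$ is a ray of $\fan'$, where one takes the star fan in $N_\R/\R v \cong \Gamma$, but the first hypothesis only guarantees $\gamma$ sits on a cone of dimension $\le 2$, so one must also treat the case where $\gamma$ lies in the relative interior of a two-dimensional cone $\delta_0$. There $\gamma$ is a wall between exactly two maximal cones, the circle splits into just two arcs, and the same monotonicity argument forces $\theta \equiv \pi \pmod{2\pi}$, again a root-of-unity contradiction. Verifying that for infinitely many $k$ the cone closure $\overline{A^k(\gamma_1)}$ is genuinely the full maximal cone $\sigma_k$ (so that regularity constrains the whole arc, not merely the ray) is the technical point tying the dynamics to the fan combinatorics.
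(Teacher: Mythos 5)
Your proof is correct and takes essentially the same route as the paper's: both exploit that the two hypotheses persist under refinement, then combine the geometric criterion of Theorem~\ref{thm:sas} with the irrational rotation of $A$ around the invariant ray $\gamma$, using $A^k\gamma_1\to\gamma$ to force a full-dimensional cone containing $\gamma$ to map regularly under every iterate, which the rotation makes impossible. The paper compresses that last step into one sentence (``none of the three dimensional cones with a face $\gamma$ can be mapped regularly under all $A^k$''), whereas you supply the chain $A\sigma_k\subseteq\sigma_{k+1}$, the arc-length monotonicity/pigeonhole argument, and the $\nu<0$ and wall-avoidance details --- a more rigorous rendering of the same idea, not a different one.
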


\begin{proof}
Notice that, for any refinement $\fan'$ of $\fan$, $\gamma$ still lies on a lower dimensional cone.
Moreover, for any refinement $\fan'$, we still have $\gamma_1\in\fan'(1)$ and $A^k\gamma_1\to\gamma$.
Hence, it suffices to show that $f_A$ is not $1$-stable on $X(\fan)$.

Since $\gamma$ is invariant under $A^k$ for all $k$, so under $A$, those two dimensional cones with $\gamma$ as a face
rotate around $\gamma$ with an irrational rotating angle. Thus, none of the three dimensional cone with a face $\gamma$ can be
mapped regularly under all $A^k$. However, since $A^k\gamma_1\to\gamma$, for large $k$, $A^k\gamma_1$ must be in the interior
of some three dimensional cone with a face $\gamma$. Therefore, the condition for $1$-stable cannot be satisfied,
and we conclude that $f_A$ is not $1$-stable on $X(\fan)$.
\end{proof}

Now we are ready to study all the subcases of Case 2.

\begin{enumerate}
\item If both $\gamma$ and $-\gamma$ are in the interior of some three dimensional cones,
and there is no ray of $\fan$ in $\Gamma$.
In this case, if we refine $\fan$ to make both the three-dimensional cones containing $v$ and $-v$,
say $\sigma_1$ and $\sigma_2$, lying on one side of $\Gamma$. Then after certain iterates of $A$, we know
one of the following two things happen for all $\ell$ large enough,
\begin{itemize}
\item $A^\ell(\sigma_1)\subset A^\ell(\sigma_1), A^\ell(\sigma_2)\subset A^\ell(\sigma_2)$, or
\item $A^\ell(\sigma_1)\subset A^\ell(\sigma_2), A^\ell(\sigma_2)\subset A^\ell(\sigma_1)$.
\end{itemize}
Furthermore, every ray in $\fan$ will map into either $\sigma_1$ or $\sigma_2$ after certain iterates. Thus, there
is a refinement $\fan'$ of $\fan$, and an $\ell_0\ge 1$, such that $f_A^\ell$ is 1-stable on $X(\fan')$ whenever
$\ell\ge\ell_0$.
\item If there is some ray of $\fan$ lie on $\Gamma$, then we can look at $A|_\Gamma$ and apply the same argument as in the proof
of \cite[Theorem~4.7(2)]{ljl} to show that $f_A$ cannot be made stable by subdividing $\fan$.

\item If either $\gamma$ or $-\gamma$ lies in the relative interior of a two dimensional cone.
We claim that, in this case, $f_A$ cannot be made stable by subdividing $\fan$.

Without loss of generality, assume $\gamma$ lies in the relative interior of a two dimensional cone $\sigma$.
Suppose that $\gamma_1$ and $\gamma_2$ are the one-dimensional faces of $\sigma$, then since $\sigma$ is
strictly convex, one of $\gamma_1, \gamma_2$ must lie on the same side of $\Gamma$ as $\gamma$, and assume it is $\gamma_1$.
Then we have $A^k\gamma_1\to \gamma$ as $k\to\infty$. By Lemma~\ref{lem:unstable}, we know that $f_A$ cannot be made stable by subdividing $\fan$.

\item If either $\gamma$ or $-\gamma$ is a cone in $\fan$, say $\gamma\in\fan(1)$, and there is another $\gamma_1\in\fan(1)$,
$\gamma_1\ne\gamma$ such that $A^k\gamma_1\to\gamma$ as $k\to\infty$. Then again by Lemma~\ref{lem:unstable},
one cannot make $f_A$ stable by subdividing $\fan$.

\item Finally, if $\gamma\in\fan(1)$, but there is no $\gamma_1\in\fan(1)$, $\gamma_1\ne\gamma$,
with $A^k\gamma_1\to\gamma$ (so we are
not in case 1 or 4), and
$-\gamma$ is in the interior of a three-dimensional cone $\sigma$ (so we are not in case 3).
Moreover, assume that there is no ray of
$\fan$ lies in $\Gamma$ to avoid case 2.

Under the above assumption, $\gamma$ is invariant under $A^k$ for any $k$, and for all $\gamma_1\in\fan(1)$,
$\gamma_1\ne\gamma$, $A^k\gamma_1\to -\gamma$ as $k\to\infty$. Thus for large $k$, $A^k\gamma_1\in\sigma$.
Notice that $\gamma$ cannot be one of the
one-dimensional face of $\sigma$ since $\sigma$ is strictly convex.
Thus the one-dimensional faces of $\sigma$ must also map into $\sigma$ for large $k$. This means for large $k$,
$A^k\sigma\subset\sigma$. To conclude, in this case, we can find a $k_0$ such that $A^k$ is $1$-stable for
$k\ge k_0$.

\end{enumerate}

\begin{ex}
Let $\fan$ be the standard fan for $\P^3$, and
\[
A= \begin{pmatrix} 1 & 1 & 5 \\ 4 & 1 & 2 \\ 1 & 5 & 1 \end{pmatrix}.
\]
The three eigenvalues of $A$ are $7$ and $-2\pm 2\sqrt{2}\i$. Note that the eigenspace associated to
$7$ is spanned by $v=(1,1,1)$. The ray generated by $-v$ is in $\fan$, and is invariant under $A$.
All other rays of $\fan$, which are generated by the three standard basis elements, will tend to $\R_+ v$
under $A^k$. Therefore, we are in Case 2(5). Indeed, $f_A$ is $1$-stable on $\P^3$. On the other hand,
$A'$ is of Case 1, thus $f_A$ is not $2$-stable for any complete toric variety.
\end{ex}

\subsection{Case 3: $|\mu|=|\nu|$}

In this case, we claim that for all complete fan $\fan$
and all $\ell$, $f_A^\ell$ is neither 1-stable nor 2-stable on $X(\fan)$.

Notice that, after a (rational) conjugation, the action of $A$ on cones is to rotate along an axis with an
angle which is irrational modulo $2\pi$. After passing to an iterate of $A$, we can make the angle as
small as we like. However, for any three dimensional polyhedral cone, if we rotate it along any axis for a small angle, it
will not remain staying in any cone.

There is at least one ray $\gamma$ in $\fan$ not in the eigenspace of $\nu$, and for some $k$, $A^k\gamma$ will lie
in the interior of some three dimensional cone. By the argument in the previous paragraph, this three dimensional cone
does not always mapped into another cone. Thus $f_A$ is not $1$-stable in $X(\fan)$.

Finally, $f_{A'}$ will be in this case again, thus cannot be made $1$-stable. This concludes our claim for Case 3.


\begin{bibdiv}
\begin{biblist}

\bib{Br}{article}{
   author={Brylinski, Jean-Luc},
   title={D\'ecomposition simpliciale d'un r\'eseau, invariante par un
   groupe fini d'automorphismes},
   language={French, with English summary},
   journal={C. R. Acad. Sci. Paris S\'er. A-B},
   volume={288},
   date={1979},
   number={2},
   pages={A137--A139},
}

\bib{Che}{article}{
   author={Chel{\cprime}tsov, I. A.},
   title={Regularization of birational automorphisms},
   language={Russian, with Russian summary},
   journal={Mat. Zametki},
   volume={76},
   date={2004},
   number={2},
   pages={286--299},
   issn={0025-567X},
   translation={
      journal={Math. Notes},
      volume={76},
      date={2004},
      number={1-2},
      pages={264--275},
      issn={0001-4346},
   },
}

\bib{Ein}{article}{
   author={de Fernex, Tommaso},
   author={Ein, Lawrence},
   title={Resolution of indeterminacy of pairs},
   conference={
      title={Algebraic geometry},
   },
   book={
      publisher={de Gruyter},
      place={Berlin},
   },
   date={2002},
   pages={165--177},
}

\bib{DF}{article}{
   author={Diller, J.},
   author={Favre, C.},
   title={Dynamics of bimeromorphic maps of surfaces},
   journal={Amer. J. Math.},
   volume={123},
   date={2001},
   number={6},
   pages={1135--1169},
}

\bib{Fa}{article}{
title={Les applications monomiales en deux dimensions},
author={Favre, Charles},
journal={Michigan Math. J.},
volume={51},
number={3},
date={2003},
pages={467--475}
eprint={arXiv:0210025 [math.CV]}
}

\bib{FJ}{article}{
   author={Favre, Charles},
   author={Jonsson, Mattias},
   title={Dynamical compactifications of ${\bf C}^2$},
   journal={Ann. of Math. (2)},
   volume={173},
   date={2011},
   number={1},
   pages={211--248},
}

\bib{FS}{article}{
   author={Fornaess, John Erik},
   author={Sibony, Nessim},
   title={Complex dynamics in higher dimension. II},
   conference={
      title={Modern methods in complex analysis},
      address={Princeton, NJ},
      date={1992},
   },
   book={
      series={Ann. of Math. Stud.},
      volume={137},
      publisher={Princeton Univ. Press},
      place={Princeton, NJ},
   },
   date={1995},
   pages={135--182},
}

\bib{Fu}{book}{
title={Introduction to Toric Varieties},
author={Fulton, William},
series={Annal of Math Studies}
volume={131}
date={1993},
publisher={Princeton University Press},
address={Princeton, NJ}
}

\bib{FuSt}{article}{
title={Intersection Theory on Toric Varieties},
author={William Fulton},
author={Bernd Sturmfels},
journal={Topology},
volume={36},
date={1997},
pages={335--354}
eprint={arXiv:9403002 [math.AG]}
}

\bib{HP}{article}{
title={Degree-growth of monomial maps},
author={Hasselblatt, Boris},
author={Propp, James},
journal={Ergodic Theory Dynam. Systems},
volume={27},
number={5},
date={2007},
pages={1375--1397}
eprint={arXiv:0604521 [math.DS]}
}

\bib{JW}{article}{
title={Stabilization of monomial maps},
author={Jonsson, Mattias},
author={Wulcan, Elizabeth},
eprint={arXiv:1001.3938 [math.DS]}
}

\bib{ljl}{article}{
title={Algebraic stability and degree growth of monomial maps and polynomial maps},
author={Jan-Li Lin},
eprint={arXiv:1007.0253 [math.DS]}
}

\bib{O}{book}{
   author={Oda, Tadao},
   title={Convex bodies and algebraic geometry},
   series={Ergebnisse der Mathematik und ihrer Grenzgebiete (3) [Results in
   Mathematics and Related Areas (3)]},
   volume={15},
   note={An introduction to the theory of toric varieties;
   Translated from the Japanese},
   publisher={Springer-Verlag},
   place={Berlin},
   date={1988},
   pages={viii+212},
   isbn={3-540-17600-4},
}

\bib{THS}{article}{
   author={Colliot-Th{\'e}l{\`e}ne, J.-L.},
   author={Harari, D.},
   author={Skorobogatov, A. N.},
   title={Compactification \'equivariante d'un tore (d'apr\`es Brylinski et
   K\"unnemann)},
   language={French, with English summary},
   journal={Expo. Math.},
   volume={23},
   date={2005},
   number={2},
   pages={161--170},
}

\bib{S}{article}{
title={Dynamics of rational maps on $\P^n$},
author={Sibony, Nessim},
book={
title={Complex dynamics and geometry},
series={SMF/AMS Texts and Monographs},
volume={10},
publisher={American Mathematical Society}
address={Providence, RI},
year={2003},
},
pages={85--166},
}

\end{biblist}
\end{bibdiv}

\end{document}